\def\Snospace~{\S{}}
\newtheorem{theorem}{Theorem}
\numberwithin{theorem}{section}
\newaliascnt{lemma}{theorem}
\newtheorem{lemma}[lemma]{Lemma}
\newaliascnt{proposition}{theorem}
\newaliascnt{corollary}{theorem}
\newtheorem{corollary}[corollary]{Corollary}
\newaliascnt{conjecture}{theorem}
\newaliascnt{definition}{theorem}
\newaliascnt{example}{theorem}
\newaliascnt{remark}{theorem}
\theoremstyle{remark}
\newtheorem{remark}[remark]{Remark}
\numberwithin{figure}{section}
\newcommand{\NN}{\mathbb{N}}
\newcommand{\ZZ}{\mathbb{Z}}
\newcommand{\QQ}{\mathbb{Q}}
\newcommand{\RR}{\mathbb{R}}
\newcommand{\CC}{\mathbb{C}}
\newcommand{\HH}{\mathbb{H}}
\DeclareMathOperator{\GL}{\operatorname{GL}}
\DeclareMathOperator{\SL}{\operatorname{SL}}
\DeclareMathOperator{\Aff}{\operatorname{Aff}}
\DeclareMathOperator{\lcm}{\operatorname{lcm}}
\DeclareMathOperator{\Aut}{\operatorname{Aut}}
\begin{document}

\title{Periodic Points of Prym Eigenforms}
\author{Sam Freedman}
\address{Brown University}
\email{sam\_freedman@brown.edu}

\date{November 2022}
\keywords{Periodic points, Prym eigenforms, Veech surfaces, Teichm\"uller dynamics}

\begin{abstract}
A point of a Veech surface is \emph{periodic} if it has a finite orbit under the surface's affine automorphism group.
We show that the periodic points of Prym eigenforms in the minimal strata of translation surfaces in genera 2, 3 and 4 are the fixed points of the Prym involution.
This answers a question of Apisa--Wright and gives a geometric proof of M\"oller's classification of periodic points of Veech surfaces in the minimal stratum in genus 2.
\end{abstract}

\maketitle

\section{Introduction}\label{sec:intro}
A \emph{periodic point} of a Veech surface $M$ is a point $p$ that has finite orbit under the affine automorphism group $\Aff^+(M)$.
Examples of periodic points are the singular points of $M$ and the fixed points of the hyperelliptic involution.
Chowdhury--Everett--Freedman--Lee \cite{icerm} produced an algorithm that inputs a (non-square-tiled) Veech surface and outputs its set of periodic points.
They used this algorithm to conjecture that the periodic points of certain Veech surfaces $S(w, e)$, \emph{Prym eigenforms} in the minimal stratum in genus 3, are the fixed points of an involution.
In this article we prove that conjecture as well as give a uniform classification of periodic points of Prym eigenforms in the minimal strata of genus $g$ translation surfaces $\Omega \mathcal{M}_2(2), \Omega \mathcal{M}_3(4)$ and $\Omega \mathcal{M}_4(6)$:

\begin{theorem}\label{thm:main_thm}
    The periodic points of a nonarithmetic Prym eigenform in the stratum $\Omega\mathcal{M}_g(2g - 2)$ for $2 \le g \le 4$ are the fixed points of its Prym involution.
\end{theorem}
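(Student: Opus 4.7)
The plan is to prove the two inclusions separately. The easy direction is that any fixed point $p$ of the Prym involution $\tau$ is periodic: $\tau$ is a holomorphic involution whose derivative is $-I$, hence $\tau \in \Aff^+(M)$ and $\tau$ is central in $\Aff^+(M)$ modulo translations, so $\mathrm{Fix}(\tau)$ is a finite $\Aff^+(M)$-invariant subset of $M$ and each of its points has a finite $\Aff^+(M)$-orbit.

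For the harder inclusion --- that every periodic point of $M$ is fixed by $\tau$ --- I would exploit the abundance of parabolic affine automorphisms. Since $M$ is a nonarithmetic Prym eigenform in a minimal stratum, $M$ is a Veech surface (McMullen in genus $2$, Lanneau--Nguyen in genus $3$, and Lanneau--Manh--Nguyen in genus $4$), and every cylinder direction is realized by a parabolic $\phi \in \Aff^+(M)$ acting as a simultaneous Dehn twist on the associated cylinder decomposition. If $p$ is a periodic point, the orbit $\langle \phi \rangle \cdot p$ must be finite, and a direct computation in each cylinder shows this forces the height coordinate of $p$ inside a cylinder of height $h$ to be a rational multiple of $h$. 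Hence within each cylinder, periodic points lie on finitely many horizontal segments at rational heights. Imposing this constraint simultaneously in two transverse periodic directions cuts the set of periodic points down to a finite, explicitly computable candidate set on $M$.

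The remaining step is to verify, for each topological prototype of Prym eigenform in $\Omega\mathcal{M}_g(2g-2)$ with $2 \le g \le 4$, that the candidate set coincides with $\mathrm{Fix}(\tau)$. Working in the prototype period coordinates of McMullen (genus $2$) and Lanneau--Nguyen / Lanneau--Manh--Nguyen (genera $3, 4$), this reduces to a finite case check on a normal-form surface in each $\GL^+(2,\RR)$-orbit closure, after which the general case follows by equivariance. I expect the main obstacle to be ruling out spurious candidates that happen to satisfy the two-direction rationality conditions without being $\tau$-fixed: eliminating these will likely require either imposing rationality in a third transverse periodic direction, invoking $\Aff^+(M)$-equivariance of the periodic-point set to kill isolated non-$\tau$-symmetric candidates, or a moduli / deformation argument across the Prym eigenform locus to reduce to a well-chosen surface with extra symmetry.
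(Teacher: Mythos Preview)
Your outline is the same strategy the paper uses---Rational Height Lemma plus rationality constraints coming from transverse parabolic directions, applied to a prototypical eigenform in each $\GL^+(2,\RR)$-orbit---and you correctly anticipate that two directions will leave residual candidates requiring a third. But there is a genuine gap in the step you call ``a finite case check on a normal-form surface in each $\GL^+(2,\RR)$-orbit closure.'' There are infinitely many such orbits, one for each admissible discriminant $D$ (and sometimes two components per $D$), so what you need is a \emph{uniform} argument that works for all large $D$, not a case check. The paper's device for this is McMullen's \emph{butterfly moves} $B_q$: these are explicit matrices in $\GL(2,\RR)$ that carry the prototype $M(w,e)$ to another prototype $M(w',h',t',e')$ of the same discriminant while turning a diagonal cylinder direction into the new horizontal. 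The butterfly direction supplies the third transverse parabolic decomposition you guessed at, and---crucially---the Rational Height computation in that direction can be done symbolically in $(w,e,\lambda)$, so it handles all sufficiently large $D$ at once. Small $D$ are then disposed of by the algorithm of Chowdhury--Everett--Freedman--Lee.

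Two further points your plan underestimates. First, the horizontal and vertical long cylinders do not cover the surface: the long boundary saddle connections lie in neither interior, and it is precisely on those saddle connections (their midpoints, in fact) that the spurious candidates sit. Eliminating those midpoints in genera $3$ and $4$ is the heart of the argument, and requires tracking them through a butterfly move to a new prototype where they acquire irrational height. Second, for this tracking to work the target prototype $B_q \cdot M(w,e)$ must again be rectilinear (zero ``tilt''); this is not automatic and forces one to choose a \emph{good} prototype $(w,e)$---one with $w \equiv q/2 \pmod q$ for a suitable $q\in\{2,4,8\}$---in each component. Showing such good prototypes exist for all large $D$ is a separate arithmetic lemma the paper proves in its appendix.
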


For an overview of the proof, see \autoref{subsec:proof-overview}.
\autoref{thm:main_thm} answers Problem 1.5 in Apisa--Wright \cite{AW} for Prym eigenforms in minimal strata.
The genus 2 case gives a new proof of the classification of periodic points on nonarithmetic Veech surfaces in $\Omega\mathcal{M}_2(2)$, due originally to M\"oller \cite{Moller2006}:
\begin{corollary}
    The periodic points of a nonarithmetic Veech surface in $\Omega\mathcal{M}_2(2)$ are the Weierstrass points.
\end{corollary}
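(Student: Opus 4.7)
The plan is to deduce the corollary by specializing \autoref{thm:main_thm} to $g = 2$ and then identifying the three relevant objects: Veech surfaces in $\Omega\mathcal{M}_2(2)$, Prym eigenforms in $\Omega\mathcal{M}_2(2)$, and Weierstrass points.

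First, I would verify that every nonarithmetic Veech surface $M$ in $\Omega\mathcal{M}_2(2)$ is in fact a Prym eigenform. In genus 2 every Riemann surface is hyperelliptic, so the hyperelliptic involution $\tau$ is an automorphism of $M$ whose quotient has genus $0$; hence the Prym decomposition $H^1(M) = H^1(M)^+ \oplus H^1(M)^-$ (into $\tau$-invariants and $\tau$-anti-invariants) has trivial $+$ part, and the Prym variety of $(M, \tau)$ is the whole Jacobian. By the McMullen classification of $\SL(2,\RR)$-orbit closures in genus 2, any nonarithmetic Veech surface in $\Omega\mathcal{M}_2(2)$ lies on some Weierstrass curve $W_D$ and thus admits real multiplication by the order $\mathcal{O}_D$ for which the holomorphic $1$-form is an eigenform. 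Since the Prym part is the entire Jacobian in genus 2, this real multiplication is automatically a Prym eigenform structure in the sense of the paper, with Prym involution $\tau$.

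Next I would note that the fixed-point set of $\tau$ is exactly the set of Weierstrass points of $M$: by Riemann-Hurwitz the hyperelliptic involution on a genus 2 surface has exactly $6$ fixed points, and these are by definition the Weierstrass points. In the stratum $\Omega\mathcal{M}_2(2)$ the unique zero of the abelian differential is itself one of these Weierstrass points, as hyperelliptic differentials in $\Omega\mathcal{M}_2(2)$ have their zero fixed by $\tau$.

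Applying \autoref{thm:main_thm} with $g = 2$ then says the periodic points of $M$ coincide with the fixed points of its Prym involution $\tau$, which by the previous paragraph are exactly the Weierstrass points, yielding the corollary. The only nontrivial step is the first one, confirming that in the genus 2 minimal stratum the Veech and Prym-eigenform conditions agree for nonarithmetic surfaces; this is where invocation of McMullen's genus 2 classification is essential, and I would expect this to be the sole nonformal ingredient of the argument.
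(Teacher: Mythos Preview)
Your proposal is correct and matches the paper's intended reasoning: the corollary is stated there as an immediate specialization of \autoref{thm:main_thm} to $g=2$, with no separate proof given. You have correctly supplied the two identifications the paper leaves implicit---that nonarithmetic Veech surfaces in $\Omega\mathcal{M}_2(2)$ are exactly the Prym eigenforms (via McMullen's classification, cf.\ \autoref{subsec:prym-eigenforms}) and that the genus-$2$ Prym involution is the hyperelliptic involution, whose fixed points are the Weierstrass points.
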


By the classification of the connected components of the Prym eigenform loci due to McMullen \cite{McMullenDiscSpin} and Lanneau--Nguyen \cite{LanneauNguyen} \cite{lanneau_nguyen_2020}, to prove \autoref{thm:main_thm} it suffices to determine the periodic points of one \emph{prototypical eigenform} $M(w, e)$ in each connected component of the loci.
Here $w, e \in \ZZ$ are particular choices of integer \emph{prototypes}; see \autoref{fig:prym-all} for examples in each genus.
\begin{figure}[t]
    \centering
    \includegraphics[scale=0.55]{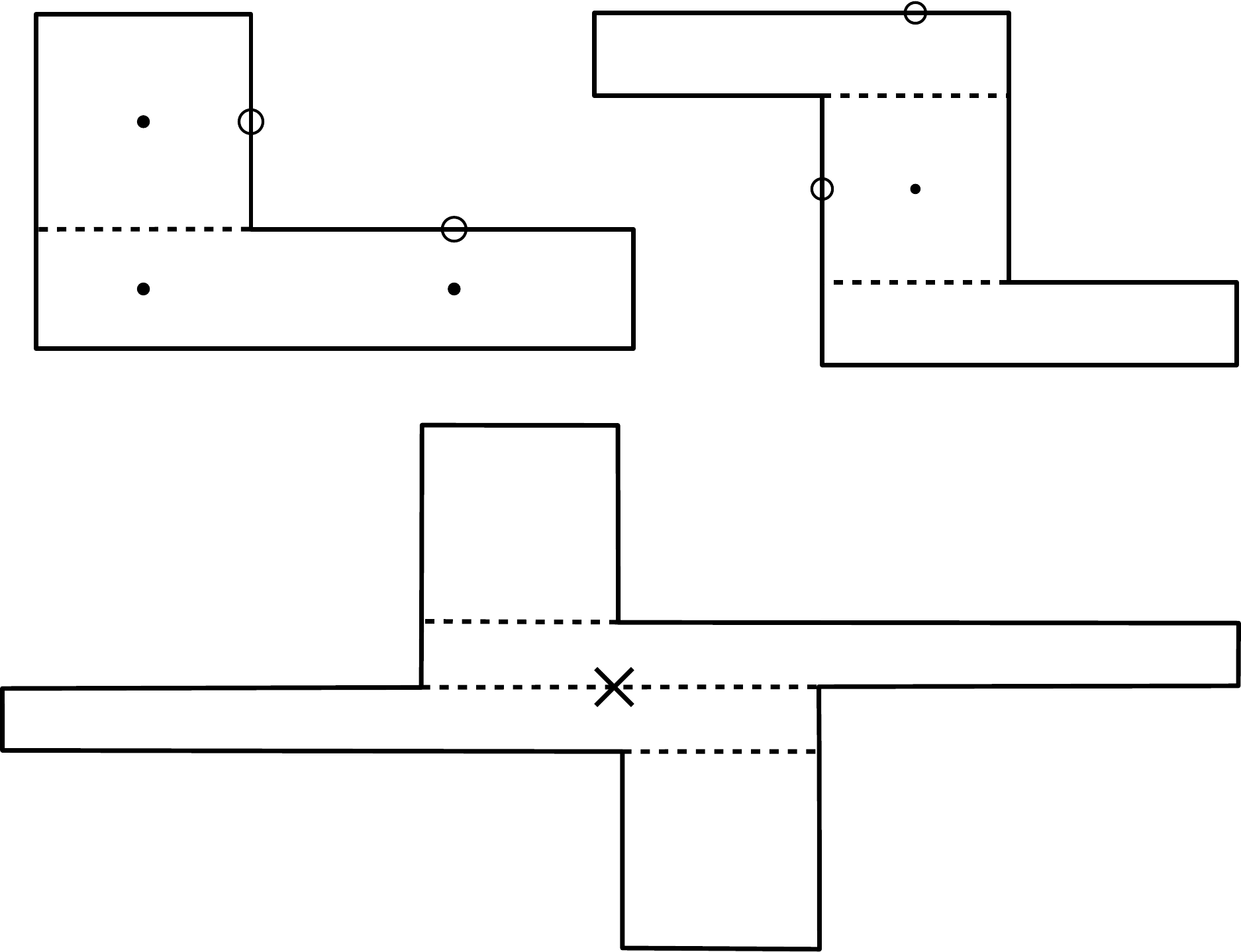}
    \caption{The L-shaped, S-shaped and X-shaped Prym eigenforms considered in this paper.}
    \label{fig:prym-all}
\end{figure}
Although each prototype $M(w, e)$ could a priori require an individual treatment, when the \emph{discriminant}
\[
    D \coloneqq \begin{cases} e^2 + 4w,& g \in {2, 4} \\ e^2 + 8w,& g = 3 \end{cases}
\]
is sufficiently large we give a uniform argument using explicit affine automorphisms called \emph{butterfly moves}.
McMullen \cite{McMullenDiscSpin} first introduced these move to classify the number of connected components of different Teichmüller curves in the strata $\Omega \mathcal{M}_2(2)$.
We use butterfly moves as concrete elements of $\Aff^+(M(w, e))$ to rule out large regions of the surface from containing periodic points.

\subsection*{Applications of \autoref{thm:main_thm}}
Periodic points are central in Teichmüller dynamics because they represent \emph{nongeneric} behavior for the $\GL(2, \RR)$-action on strata of translation surfaces.
Here, \emph{nongeneric} means that the orbit closure of a surface with a marked periodic point $(M, p)$ in a stratum of marked translation surfaces has smaller than expected dimension.
We give three applications here:
\begin{itemize}
    \item \textbf{Counting holomorphic sections of surface bundles over Teichmüller curves}:
    Veech surfaces generate certain surface bundles over (covers of) Teichm\"uller curves.
    (See, e.g., Apisa \cite{ApisaGenus2}.)
    Shinomiya \cite{Shinomiya} proved that a holomorphic section of such bundles is uniquely determined by a choice of periodic point on the generating Veech surface.
    Combining Shinomiya's result with \autoref{thm:main_thm}, we can restrict the number of holomorphic sections of those bundles generated by Prym eigenforms:
    \begin{corollary}
        The surface bundles generated by Prym eigenforms in the minimal strata in genera $2 \le g \le 4$ have $10 - 2g$ distinct holomorphic sections.
    \end{corollary}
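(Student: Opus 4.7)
The plan is to chain together Shinomiya's bijection and \autoref{thm:main_thm}, then reduce the count to an elementary application of Riemann--Hurwitz.

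First, I would invoke Shinomiya's theorem \cite{Shinomiya}: every holomorphic section of the surface bundle generated by a (nonarithmetic) Veech surface $M$ is uniquely determined by a choice of periodic point of $M$, and this assignment is a bijection. Applied to a nonarithmetic Prym eigenform $M$ in $\Omega\mathcal{M}_g(2g-2)$ and combined with \autoref{thm:main_thm}, this identifies the set of holomorphic sections with the set of fixed points of the Prym involution $\tau$ on $M$.

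Second, I would count $\#\mathrm{Fix}(\tau)$ uniformly for $2 \le g \le 4$. By definition of a Prym eigenform, the Prym subvariety $\ker\bigl(\mathrm{Jac}(M) \to \mathrm{Jac}(M/\langle \tau \rangle)\bigr)$ is $2$-dimensional, so the quotient $M/\langle \tau \rangle$ has genus $g' = g - 2$, namely $0, 1, 2$ in genera $2, 3, 4$ respectively. Applying Riemann--Hurwitz to the degree-$2$ branched cover $M \to M/\langle \tau \rangle$ yields
\[
    2g - 2 = 2(2g' - 2) + \#\mathrm{Fix}(\tau),
\]
and substituting $g' = g-2$ gives $\#\mathrm{Fix}(\tau) = 2g + 2 - 4g' = 10 - 2g$. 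This produces $6, 4, 2$ fixed points in genera $2, 3, 4$ respectively, matching the claimed count.

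The argument is essentially a concatenation of two black boxes (\autoref{thm:main_thm} and Shinomiya's theorem) with a routine Euler-characteristic calculation, so no substantive obstacle is anticipated. The only point requiring care is checking that Shinomiya's framework applies to all three strata under consideration---specifically, that nonarithmetic Prym eigenforms in $\Omega\mathcal{M}_g(2g-2)$ generate surface bundles of the type his theorem addresses---but this is immediate from the fact that they are nonarithmetic Veech surfaces.
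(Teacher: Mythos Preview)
Your proposal is correct and follows essentially the same approach as the paper: the paper derives the corollary by combining Shinomiya's theorem with \autoref{thm:main_thm}, and separately notes (in \autoref{subsec:prym-eigenforms}) that Riemann--Hurwitz gives $10 - 2g$ fixed points of the Prym involution. Your write-up simply makes the Riemann--Hurwitz calculation explicit, which the paper leaves as a one-line remark.
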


    \item \textbf{Solving the finite-blocking problem}:
    Two points $p$ and $q$ on a translation surface $M$ are \emph{finitely blocked} if there is a finite set $B \subset M$ such that all straight line paths from $p$ to $q$ contain a point of $B$.
    Apisa--Wright \cite{AW}*{Theorem 3.6} showed that if $p$ and $q$ are finitely blocked, then either both $p$ and $q$ are periodic points, or $\pi_{Q_{\min}}(p) = \pi_{Q_{\min}}(p)$ where $\pi_{Q_{\min}} : M \to Q_{\min}$ is a certain degree 1 or 2 covering of a half-translation surface (c.f. Apisa--Wright \cite{AW}*{Lemma 3.3}).
    We can then follow the proof of Apisa--Saavedra--Zhang \cite{apisa2020periodic}*{Corollary 1.6} to show
    \begin{corollary}
        The pairs of points on a Prym eigenform in the minimal stratum in genus 2, 3 or 4 that are finitely blocked from each other are a nonsingular point and its image under the Prym involution.
    \end{corollary}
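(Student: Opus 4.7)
The plan is to combine \autoref{thm:main_thm} with the Apisa--Wright dichotomy and adapt the argument of Apisa--Saavedra--Zhang \cite{apisa2020periodic}*{Corollary 1.6}. Let $(M, \omega)$ be a nonarithmetic Prym eigenform in the minimal stratum in genus $2 \le g \le 4$, with Prym involution $\tau$. Since $\tau^* \omega = -\omega$, the square $\omega^2$ descends to a quadratic differential on the quotient $Q \coloneqq M/\tau$, so $\pi \colon M \to Q$ is a degree $2$ half-translation covering. I would first verify that $\pi$ realizes the map $\pi_{Q_{\min}}$ of Apisa--Wright \cite{AW}*{Lemma 3.3}; this reduces to checking that $\tau$ generates the full group of involutions of $(M, \omega)$ anti-preserving the abelian differential, which is expected on a nonarithmetic eigenform in the minimal stratum.

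Now suppose $p \neq q$ is a finitely blocked pair on $M$. Apisa--Wright \cite{AW}*{Theorem 3.6} gives two (not mutually exclusive) alternatives: (a) both $p$ and $q$ are periodic, or (b) $\pi(p) = \pi(q)$. Case (b) is immediate: since $\deg \pi = 2$, the conditions $\pi(p) = \pi(q)$ and $p \neq q$ force $q = \tau(p)$; and because $M$ has a unique zero, which $\tau$ must fix, neither $p$ nor $q$ is that zero, so both are nonsingular. This gives exactly the description claimed by the corollary.

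The remaining work is case (a). By \autoref{thm:main_thm}, both $p$ and $q$ lie in the finite fixed set of $\tau$. Following Apisa--Saavedra--Zhang, I would argue that this case cannot produce distinct finitely blocked points: any minimal blocking set $B$ may be replaced by its $\Aff^+(M)$-orbit closure, which is a finite union of finite orbits and hence contained in the periodic set, i.e., in the fixed set of $\tau$; the Veech property of $M$ then supplies straight-line paths from $p$ to $q$ in a dense set of directions, enough that some such path must avoid the finite set $B \setminus \{p, q\}$, contradicting the blocking property. Executing this reduction rigorously is the principal obstacle, as it requires a careful count of saddle connections between prescribed periodic points.

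Finally, the converse---that every pair $(p, \tau(p))$ with $p$ nonsingular is finitely blocked---follows from the classical reflection argument: reversing any straight-line path $\gamma$ from $p$ to $\tau(p)$ and applying $\tau$ yields another straight-line path with the same endpoints and direction, hence the same path, so $\gamma(t) = \tau(\gamma(1-t))$; setting $t = 1/2$ places $\gamma(1/2)$ in the finite fixed set of $\tau$, which serves as the required blocking set.
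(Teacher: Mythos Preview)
Your approach matches the paper's, which itself offers no proof beyond invoking Apisa--Wright \cite{AW}*{Theorem~3.6} together with \autoref{thm:main_thm} and then deferring to Apisa--Saavedra--Zhang \cite{apisa2020periodic}*{Corollary~1.6}. Your identification of $\pi_{Q_{\min}}$ with the quotient by the Prym involution, your treatment of case~(b), and your reflection argument for the converse are all correct and more explicit than anything the paper writes down.

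One caution on your case~(a) sketch: the step ``replace $B$ by its $\Aff^+(M)$-orbit closure, which is a finite union of finite orbits'' is circular as stated, since finiteness of those orbits is exactly the periodicity of the points of $B$ that you are trying to establish. Likewise the ``dense set of directions'' heuristic is not quite the right picture, because straight-line segments joining two prescribed marked points lie in only countably many directions, so one really does need the quantitative saddle-connection count you allude to. You are right to flag this as the principal obstacle; since the paper also simply cites \cite{apisa2020periodic} at this point, your proposal is at the same level of completeness as the paper's own argument.
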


    \item
    \textbf{Evidence for higher-rank orbit closures}:
    Apisa \cite{ApisaGenus2} finished the classification of periodic points on primitive genus 2 translation surfaces that M\"oller \cite{Moller2006} initiated.
    By finding degenerations to lower genus eigenforms with marked points in the boundary, Apisa showed that the gothic locus that Eskin--McMullen--Mukamel--Wright \cite{EMMW} constructed is the unique nonarithmetic rank two affine invariant subvariety in $\Omega\mathcal{M}_4(6)$.
    It seems interesting whether one can combine \autoref{thm:main_thm} with  similar degeneration arguments to classify periodic points on Prym eigenforms in nonminimal strata and restrict the existence of other higher-rank orbit closures.
\end{itemize}

\subsection*{Previous Work}\label{subsec:previous-work}
Gutkin--Hubert--Schmidt \cite{GutkinHubertSchmidt2003} first showed that (nonarithmetic) closed $\GL(2, \RR)$-orbits have a finite number of periodic points (see Chowdhury--Everett--Freedman--Lee \cite{icerm} for another proof).
Eskin--Filip--Wright \cite{efw} then showed finiteness for all nonarithmetic affine invariant subvarieties.

Authors have computed this finite set of periodic points for other Veech surfaces: M\"oller \cite{Moller2006} for Veech surfaces in genus 2, Apisa \cite{ApisaGenus2} for nonarithmetic eigenform loci in genus 2, Apisa \cite{ApisaGL2R} for components of strata of translation surfaces, Apisa--Saavedra--Zhang \cite{apisa2020periodic} for regular $2n$-gons and double $(2n + 1)$-gons, and B. Wright \cite{wright2021periodic} for Veech--Ward surfaces.

\begin{remark}\label{rem:conjectures}
    All known periodic points on Veech surfaces are the fixed points of an involution.
    It would be interesting to see if this holds for the Veech surfaces in the  gothic loci due to Eskin--M\"oller--Mukamel--Wright \cite{EMMW}.

    The philosophy of the algorithm in Chowdhury--Everett--Freedman--Lee \cite{icerm} is that the Rational Height Lemma (see \autoref{lem:rat-ht}) applied with three ``independent'' parabolic directions is enough to determine the periodic points of the surface.
    Our argument in genus two uses three such directions, yet our arguments in genera four and five use slightly more.
    Whether there is another argument using three directions in these latter cases, and formally proving this heuristic, seems worthy of future study.
\end{remark}

\subsection*{Outline of Paper}\label{subsec:paper-outline}
In \autoref{sec:background}, we give background on flat geometry, Prym eigenforms, butterfly moves, and periodic points.

As discussed above, the work of McMullen \cite{McMullenDiscSpin} and Lanneau--Nguyen \cites{LanneauNguyen,lanneau_nguyen_2020} allows us to consider specific choices of Prym eigenforms.
We choose certain prototypes $M(w, e)$ that have evident horizontal and vertical cylinder decompositions, each with a corresponding global multi-twist $T_H$ and $T_V$ respectively.
In \autoref{sec:dehn-twist} we use \autoref{lem:interior-per-pt--mult-one--general} to classify the points of $M(w, e)$ having finite orbit under the subgroup $\langle T_H, T_V \rangle$.
This reduces the problem to considering the periodic points on certain horizontal and vertical boundary saddle connections of $M(w, e)$, which we carry out in \autoref{subsec:g2}, \autoref{subsec:g3} and \autoref{subsec:g4} for genera 2, 3 and 4 respectively.

For this, we use butterfly moves $B_q$ (see \autoref{fig:butterfly-moves} for an example) to produce new cylinder directions on $M(w, e)$ with which we can rule out points from being periodic.
In fact the butterfly move $B_q$ yields a new prototypical eigenform $M'(w, e) \coloneqq B_q \cdot M(w, e)$, onto which we can transfer the candidate periodic points.
One difficulty is that some prototypes $(w, e)$ determine eigenforms $M'(w, e)$ that are not again rectilinear.
But in \autoref{appendix:good-prototypes} we show that, in each connected component of Prym eigenforms, we can find a \emph{good prototype} $M(w, e)$ such that $M'(w, e)$ is rectilinear.
Under the assumption that $M(w, e)$ is good, we can analyze its candidate points on the new prototype $M'(w, e)$ to finish the classification.

\subsection*{Acknowledgements}
Part of this work took place at Univert\'e Paris-Saclay under a Fondation Math\'ematique Jacques Hadamard Junior Scientific Visibility Grant.
We thank Paul Apisa, Jeremy Kahn, Samuel Leli\`evre and Duc-Manh Nguyen for helpful conversations, as well as Vincent Delecroix and Julian R\"uth for their collaboration and support in working with Flatsurf.
We thank Ethan Dlugie, Joseph Hlavinka, Siddarth Kannan and Jordan Katz for comments and discussions on earlier drafts.

\section{Background}\label{sec:background}
\subsection{Flat Geometry}
General surveys on translation surfaces are Zorich \cite{zorich}, Wright \cite{wrightsurv} and Massart \cite{massart}.

A \emph{saddle connection} $\gamma$ on a translation surface is a straight-line locally geodesic segment connecting two cone points and having no cone points in its interior.
Each saddle connection has an associated vector in $\mathbb{C}$ defined up to $\pm 1$ called its \emph{holonomy} vector.

A \emph{cylinder} $C$ is the isometric image of a right Euclidean cylinder having a union of saddle connections for each boundary component.
Cylinders have a foliation by homotopic closed trajectories of the straight-line flow that are parallel to the boundary saddle connections, determining a \emph{direction} for the cylinder.
The cylinder is \emph{simple} if each boundary component consists of a single saddle connection.
For example, the top square horizontal cylinder of \autoref{fig:prym2} is simple, yet the bottom rectangular horizontal cylinder is not.
The total length of its boundary components is its \emph{circumference} $c(C)$, and the perpendicular distance between them is its \emph{height} $h(C)$.
The ratio $m(C) \coloneqq h(C)/c(C)$ is its \emph{modulus}.
A \emph{cylinder decomposition} of $M$ is a collection of parallel cylinders whose union is $M$.

\subsection{Veech Surfaces and Multi-Twists}\label{subsec:multi-twists}
A reference for this section is Hubert--Schmidt \cite{HubertSchmidt}.

An \emph{affine automorphism} of $M$ is a self-diffeomorphism that, in charts from the translation atlas of $M$, has the form $\mathbf{x} \mapsto A\mathbf{x} + \mathbf{b}$ with $A \in \SL(2, \RR)$ and $\mathbf{b} \in \RR^2$; we write $\Aff^+(M)$ for the group of orientation-preserving affine automorphisms.
One checks that the matrix $A$ is independent of choice of coordinates, giving a well-defined map derivative map $D: \Aff^+(M) \to \SL(2, \RR)$.
The \emph{Veech group} $\SL(M)$ is the image of $\Aff^+(M)$ in $\SL(2, \RR)$.
For example, the Veech group of the square torus is $\SL(2, \ZZ)$.

There is a short exact sequence
\[
    1 \to \Aut(M) \to \Aff^+(M) \xrightarrow{D} \SL(M) \to 1,
\]
where the kernel $\Aut(M)$ is the group of \emph{translation automorphisms}, i.e., affine automorphisms having linear part equal to the identity.
Because $\Aut(M)$ is generically trivial (See M\"oller \cite{MollerSurv}*{Theorem 1.1}, and is trivial in all cases considered in this paper, we will identify $\Aff^+(M)$ with $\SL(M)$.

An important class of translation surfaces are $\emph{Veech surface}$: those for which $\SL(M) \le \SL(2, \RR)$ is a lattice (i.e., for which $\HH^2 / \SL(M)$ has finite area).
An important consequence of the lattice property is the \emph{Veech dichotomy}: in every direction $\theta$, either all bi-infinite straight-line trajectories are periodic or uniformly distribute over the surface.

As a consequence, Veech surfaces are \emph{parabolic} in the direction of every saddle connection, meaning that in those directions $M$ decomposes into cylinders having rationally-related (i.e., \emph{commensurable}) moduli.
The commensurability allows the Dehn twists in each cylinder to assemble into a global affine automorphism: a Dehn multi-twist $\phi \in \Aff^+(M)$ having derivative $A \coloneqq D\phi \in \SL(M)$ of form
\[
    A = \begin{pmatrix} 1 & t \\ 0 & 1 \end{pmatrix}, \quad t \coloneqq \lcm(m_1^{-1}, \dots, m_n^{-1}).
\]
See, e.g., McMullen \cite{McMullenEigen}*{Lemma 9.7} for more details.

In the above setup, for each cylinder $C_i$ we can write $t = a_i m_i^{-1}$ for some $a_i \in \NN$.
We will call $a_i$ the \emph{multiplicity} of the cylinder $C_i$ in the surface $M$.
The multiplicity records the number of Dehn twists the global multi-twist $A$ does in the cylinder $C_i$.

\subsection{Prym Eigenforms}\label{subsec:prym-eigenforms}
McMullen \cite{McMullenPrym} introduced Prym eigenforms as generalizations of his construction of Veech surfaces in genus 2 (see McMullen \cite{McMullenEigen}).
They come equipped with a \emph{Prym involution} $i_M: M \to M$ such that $(i_M)^*$ acts as $-I$ on $H^*(M)$, and the quotient $M / i_M$ has genus two less than $M$.
The Riemann-Hurwitz Theorem implies that $i_M$ has $10 - 2g$ fixed points.

McMullen proved that the Prym eigenforms form $\GL(2, \RR)$-invariant \emph{Prym eigenform loci} $\Omega E_D(2g - 2) \subset \Omega\mathcal{M}_g(2g - 2)$ where $D$ is the \emph{discriminant}, an integer parameter congruent to 0 or 1 modulo 4.
The loci $\Omega E_D(2g - 2)$ consist of up to two $\GL(2, \RR)$-orbits (see McMullen \cite{McMullenDiscSpin} and Lanneau--Nguyen \cite{LanneauNguyen} \cite{lanneau_nguyen_2020}).
These authors constructed explicit polygonal \emph{prototypical Prym eigenforms} (or \emph{prototypes}) $M(w, h, t, e)$ generating these closed $\GL(2, \RR)$-orbits; see \autoref{fig:prym2}, \autoref{fig:prym3-prototypes}, and \autoref{fig:prym4-disc53} for examples.
Four integers $(w, h, t, e) \in \ZZ^4$ satisfying certain arithmetic conditions determine the prototype (see \autoref{sec:main_thm});
conversely, McMullen \cite{McMullenDiscSpin} and Lanneau--Nguyen \cite{LanneauNguyen} \cite{lanneau_nguyen_2020} showed that every Prym eigenform is $\GL(2, \RR)$-equivalent to a prototype $M(w, h, t, e)$ for $(w, h, t, e)$ satisfying the conditions.
In this paper we write $M(w, e) \coloneqq M(w, 1, 0, e)$ and $M'(w, e)$ when $h \neq 1$ and/or $t \neq 0$, since we won't have need for the explicit values of $h$ or $t$.

\subsection{Butterfly Moves}\label{subsec:butterfly-moves}
Butterfly moves $B_k, k \in \NN \cup \{\infty\}$ are explicit affine transformations that exhibit two prototypes $M(w, e)$ and $M(w', h', t', e')$ as lying on the same $\SL(2, \RR)$-orbit.
In other words, they ``connect'' the cusp of the Veech group corresponding to the horizontal direction of $L(w, e)$ to the cusp coming from another \emph{butterfly cylinder } direction on the surface.
McMullen \cite{McMullenDiscSpin}*{Section 7} introduced these moves to classify the number of connected components of the eigenform loci $\Omega E_D(2) \subset \Omega\mathcal{M}_2(2)$; Lanneau--Nguyen \cite{LanneauNguyen} \cite{lanneau_nguyen_2020} used analogous moves to study the connected components of the higher genus Prym eigenform loci $E_D(4)$ and $E_D(6)$.
We will use butterfly moves and the fact that $\GL(2, \RR)$ preserves periodic points to transport candidate periodic points of $L(w, e)$ to another prototype $L(w', h', t', e')$ on which we can classify them.

We now describe the details of the construction for a genus 2 eigenform $L(w, e)$ that are relevant for the proof of \autoref{thm:main_thm}, following the exposition in McMullen \cite{McMullenDiscSpin}*{\S 7}:

For an integer $k \in \NN$ satisfying the \emph{admissibility criterion} $(e + 2k)^2 < D$ (see McMullen \cite{McMullenDiscSpin}*{Theorem 7.2}), there is curve $\gamma_k$ contained in the long horizontal cylinder of $L(w, e)$ that wraps once horizontally and $k$ times vertically.
In other words, $\gamma_k$ is a straight-line trajectory having slope $k/w$ that emanates from the lower-left corner of the long horizontal cylinder.
We also let $\gamma_\infty$ denote the curve wrapping once in the vertical direction.
Completing $\gamma_k$ to a cylinder decomposition produces another two-cylinder decomposition of $L(w, e)$ on which $\gamma_k$ bounds a simple cylinder on one side.
We call $\gamma_k$ the \emph{$k$-butterfly curve}, and the resulting simple cylinder $C_k$ the \emph{$k$-butterfly cylinder}.
We also choose a curve $\eta$ that connects the two boundary components of $C_k$.
See \autoref{fig:butterfly-moves} for the case $k = 2$.
\begin{figure}[ht]
    \centering
    \includegraphics[width=\textwidth]{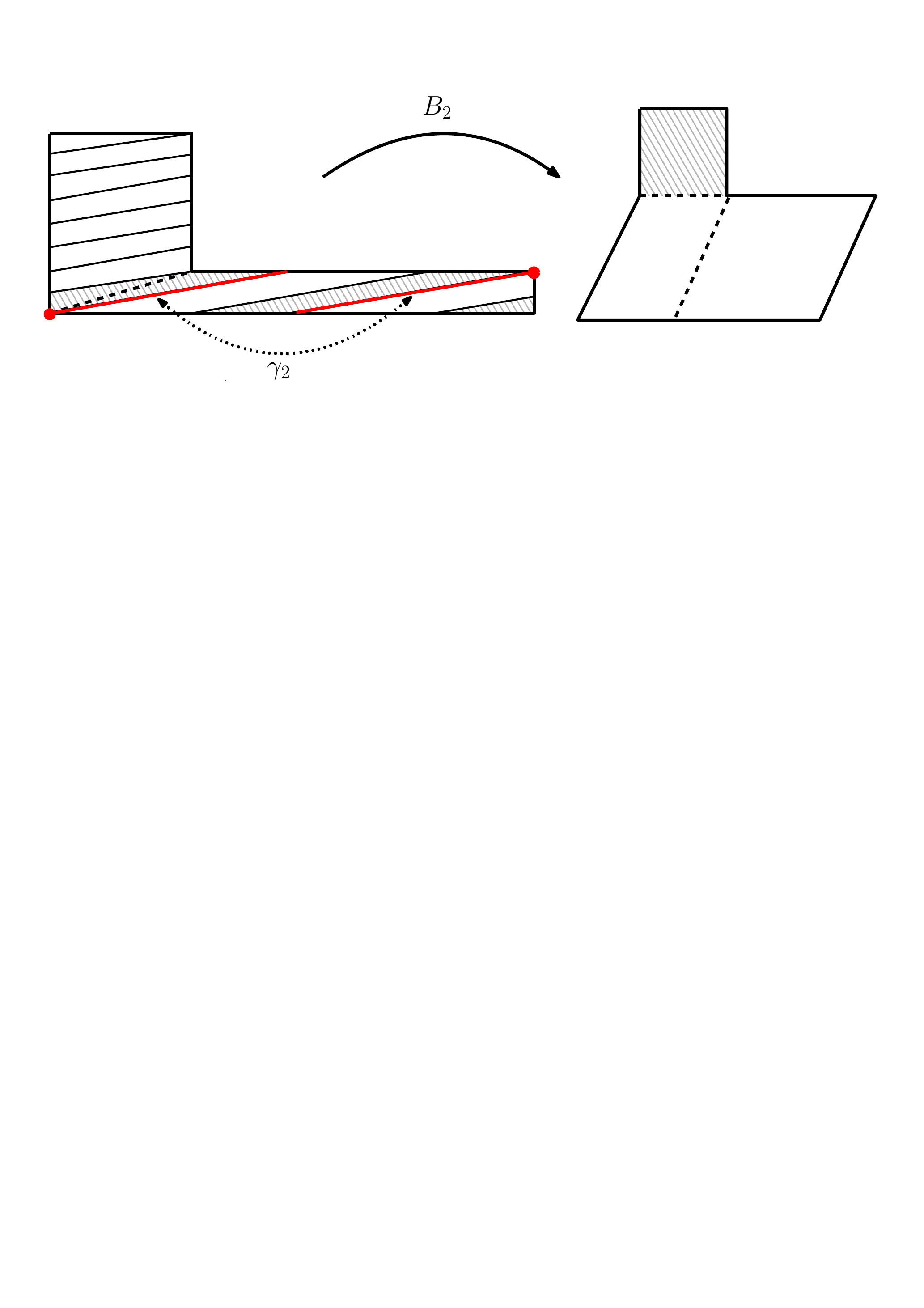}
    \caption{The butterfly move $B_2$ on an L-shaped Prym eigenform.}
    \label{fig:butterfly-moves}
\end{figure}

It follows that whenever $k \in \NN \cup \{\infty\}$ is admissible, we can use $\GL(2, \RR)$ to normalize $C_k$ to a rectilinear square with the images of $\gamma_k$ and $\eta$ being the horizontal and vertical sides.
The \emph{$k$-butterfly matrix} carrying out this normalization is
\begin{equation}
    B_k \coloneqq \begin{bmatrix} w & \lambda \\ k & 1  \end{bmatrix}^{-1}, \quad (k \in \NN).
\end{equation}
McMullen \cite{McMullenDiscSpin}*{Theorem 7.3} then shows that, up to a scaling factor and horizontal shear, the surface $B_k \cdot L(w, e)$ becomes a new prototypical eigenform $L(w', h', t', e')$ for new parameters $(w', h', t', e') \in \ZZ^4$ satisfying the same arithmetic conditions as $(w, 1, 0, e)$.
(Specifically, we have $e' = -e - 8$, $h' = \gcd(2, w)$, $w' = (D - e'^2)/(4w')$, and $t' \in \{0, 1\}$ depending on whether $\gcd(w', h', e') = 1$.)

\subsection{Periodic Points}\label{subsec:periodic-points}
\subsubsection*{Rational Height Lemma}
Our main tool for demonstrating that a point $p \in M$ is not periodic is the \emph{Rational Height Lemma},
It has appeared in, e.g., Apisa \cite{ApisaGL2R}, Apisa--Saavedra--Zhang \cite{apisa2020periodic}, and B. Wright \cite{wright2021periodic}.

Let $h(C, p)$ denote the \emph{height} of point $p$ in a cylinder $C$, that is the perpendicular distance from the bottom boundary saddle connection of $C$ to $p$.
We say $p$ has \emph{rational height} in $C$ if $h(C, p)/h(C)$ is rational.
The Rational Height Lemma says that a point having irrational height in a cylinder with a direction that is parabolic on $M$ cannot be periodic:
\begin{lemma}[Rational Height Lemma]\label{lem:rat-ht}
    If $C$ is a cylinder having a parabolic direction and $p \in C$ is a periodic point, then $p$ has rational height in $C$.
\end{lemma}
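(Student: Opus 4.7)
The plan is to exploit the explicit Dehn multi-twist coming from the parabolic direction of $C$. By the discussion in \autoref{subsec:multi-twists}, if $C$ sits in a cylinder decomposition whose common direction is parabolic, then there exists an affine automorphism $\phi \in \Aff^+(M)$ (a global multi-twist) whose derivative, in coordinates where the core curves of $C$ are horizontal, has the form $D\phi = \left(\begin{smallmatrix} 1 & t \\ 0 & 1 \end{smallmatrix}\right)$ with $t = \lcm(m_1^{-1}, \dots, m_n^{-1})$. Writing $t = a \cdot m(C)^{-1} = a \cdot c(C)/h(C)$ where $a \in \NN$ is the multiplicity of $C$, the restriction $\phi|_C$ is the $a$-th power of the standard Dehn twist of $C$ and in particular sends $C$ to itself.

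The first observation I would make is that $\phi$ preserves heights within $C$: the shear $D\phi$ fixes the horizontal coordinate lines setwise, so a point of $C$ at height $y$ is mapped to a point of $C$ at height $y$, translated horizontally by $t y$ modulo $c(C)$. Consequently, the $\langle \phi \rangle$-orbit of $p$ lies entirely at height $h(C, p)$, and its reductions modulo $c(C)$ are the shifts $\{n \cdot t \cdot h(C, p) \bmod c(C) : n \in \ZZ\}$.

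Since $p$ is periodic, its full $\Aff^+(M)$-orbit is finite, hence so is its $\langle \phi \rangle$-orbit. This forces the set of shifts above to be finite, which is equivalent to $t \cdot h(C, p)/c(C) \in \QQ$. Substituting $t = a \cdot c(C)/h(C)$ gives
\[
    a \cdot \frac{h(C, p)}{h(C)} \in \QQ,
\]
and since $a$ is a positive integer this forces $h(C, p)/h(C) \in \QQ$, as required. The only non-trivial input is the existence and explicit form of the multi-twist $\phi$, which is exactly the content of the parabolic cylinder construction recalled in \autoref{subsec:multi-twists}; once that is in place the argument is essentially a one-line reduction, so there is no substantive obstacle.
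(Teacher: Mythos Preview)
Your argument is correct and follows essentially the same route as the paper: use the global multi-twist in the parabolic direction, observe that it preserves heights and acts on $p$ by horizontal translations by multiples of $t\cdot h(C,p)$ modulo $c(C)$, and conclude from finiteness of the orbit that $t\cdot h(C,p)/c(C)\in\QQ$, hence $h(C,p)/h(C)\in\QQ$. The paper phrases this by choosing two equal iterates $T_C^{k}(p)=T_C^{\ell}(p)$ and solving, but the content is identical.
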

\begin{proof}
    By rotating the surface, we may assume that $C$ is horizontal.
    Set $h \coloneqq h(C)$ and $c \coloneqq c(C)$.
    Choose flat coordinates on $C$ so that the origin is on the lower boundary component and write $p = (x, y)$.
    We know that the global parabolic multi-twist has the form $T_C = \begin{bmatrix} 1 & n \frac{c}{h} \\ 0 & 1 \end{bmatrix}$ for some nonzero $n \in \NN$.

    Because $p$ is periodic, its orbit under the subgroup $\langle T \rangle$ is finite.
    We can then find distinct natural number $k > \ell \ge 0$ such that $T^k_C(p) = T^\ell_C(p)$.
    This implies that
    \[
        ([x + k \cdot \frac{nc}{h}y] \pmod{c}, y) = ([x + \ell \cdot \frac{nc}{h}y] \pmod{c}, y),
    \]
    or $k \cdot \frac{nc}{h}y - \ell \cdot \frac{nc}{h}y = ac$ for some $a \in \ZZ$.
    Dividing through by $c$ yields $y/h n (k - \ell) = a$, from which we conclude that $y / h \in \QQ$.
\end{proof}

\subsubsection*{Prym fixed points}
As mentioned in the introduction, fixed points of the hyperelliptic involution are important examples of periodic points on Veech surfaces.
Via an analogous argument, we see that fixed points of the Prym involution are also periodic:
\begin{lemma}\label{lem:prym-fps-periodic}
    The fixed points of the Prym involution are periodic points.
\end{lemma}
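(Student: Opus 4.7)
The plan is to show that the Prym involution $i_M$ commutes with every element of $\Aff^+(M)$, which will imply that the finite set $\mathrm{Fix}(i_M)$ of size $10 - 2g$ is preserved setwise by $\Aff^+(M)$; consequently every point in it has finite orbit, hence is periodic.

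First I would pin down the derivative of $i_M$. Since $i_M$ is a nontrivial affine involution, $D(i_M) \in \SL(2, \RR)$ has order exactly $2$. Any such matrix $A$ satisfies $A^2 = I$, and because its minimal polynomial divides $x^2 - 1$ while $\det A = 1$, a direct computation (or the observation that the only order-$2$ element of $\SL(2, \RR)$ is the center) forces $D(i_M) = -I$.

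Next, I would exploit the centrality of $-I$. For any $\phi \in \Aff^+(M)$,
\[
    D(\phi \circ i_M \circ \phi^{-1}) \;=\; D(\phi)\cdot(-I)\cdot D(\phi)^{-1} \;=\; -I \;=\; D(i_M),
\]
so the affine automorphism $\phi \circ i_M \circ \phi^{-1} \circ i_M^{-1}$ has derivative $I$ and hence lies in $\Aut(M)$. For the Prym eigenforms considered in this paper, $\Aut(M)$ is trivial (as noted in \autoref{subsec:multi-twists}), so $\phi \circ i_M \circ \phi^{-1} = i_M$; that is, $i_M$ is central in $\Aff^+(M)$.

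Finally, from $\phi \circ i_M = i_M \circ \phi$ I would deduce that $\phi$ sends fixed points of $i_M$ to fixed points of $i_M$: if $i_M(p) = p$, then $i_M(\phi(p)) = \phi(i_M(p)) = \phi(p)$. Therefore $\phi$ preserves the finite set $\mathrm{Fix}(i_M)$, so the $\Aff^+(M)$-orbit of any $p \in \mathrm{Fix}(i_M)$ is contained in $\mathrm{Fix}(i_M)$ and is a fortiori finite, proving that $p$ is a periodic point. There is no real obstacle here; the only subtlety is invoking triviality of $\Aut(M)$ to upgrade the congruence $\phi i_M \phi^{-1} \equiv i_M \pmod{\Aut(M)}$ to an equality (and if one wanted a statement valid beyond the paper's setting, finiteness of $\Aut(M)$ alone would still give a finite orbit via the union $\bigcup_{\sigma \in \Aut(M)} \mathrm{Fix}(\sigma \circ i_M)$).
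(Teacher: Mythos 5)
Your proof is correct and follows essentially the same route as the paper: identify $D(i_M) = -I$, use centrality of $-I$ together with triviality of $\Aut(M)$ to conclude $i_M$ is central in $\Aff^+(M)$, and deduce that $\Aff^+(M)$ preserves the finite fixed-point set. Your extra care in justifying $D(i_M) = -I$ and in noting how finiteness of $\Aut(M)$ would suffice is a harmless elaboration of the paper's argument, not a different approach.
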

\begin{proof}
    (Following Apisa--Saavedra--Zhang \cite{apisa2020periodic}*{Remark 2.9})
    Since $D(i_M) = -I \in \SL(M)$ and $\Aff^+(M) \cong \SL(M)$ for $M$ a Prym eigenform, $i_M$ commutes with all elements of $\Aff^+(M)$.
    Then for $p$ a fixed point of the Prym involution, for any $\phi \in \Aff^+(M)$ we have
    \[
        i_M (\phi(p)) = \phi(i_M(p)) = \phi(p),
    \]
    and $\phi(p)$ is also a fixed point.
\end{proof}

\begin{remark}\label{rem:alternate-definition-of-per-pt}
    Apisa \cite{ApisaGL2R} gave a definition of periodic point for general translation surfaces that specializes to ours when the surface is Veech.
    The condition for $p \in M$ being periodic becomes
    \begin{equation*}
        \dim_{\CC} \overline{\SL(2, \RR) \cdot (M, p)} = \dim_{\CC} \overline{\SL(2, \RR) \cdot M}.
    \end{equation*}
    (The left-hand side denotes the orbit closure in a stratum of marked translation surfaces.)
\end{remark}

\section{Periodic Points of Two Multi-twists}\label{sec:dehn-twist}
In this section, we classify the points having finite order under the subgroup $\langle T_H, T_V \rangle \subset \SL(M)$ generated by the horizontal and vertical multi-twist.
(We use the horizontal and vertical directions for convenience; the Lemmas hold for any two transverse parabolic directions on $M$ by first applying a shear.)

We first classify the periodic points interior to a horizontal multiplicity 1 cylinder that decomposes into two rectangular \emph{regions}, i.e., a connected component of an intersection between a horizontal cylinder and a vertical cylinder.
Recall (see \autoref{subsec:multi-twists}) that a multiplicity 1 cylinder is one in which the global horizontal multi-twist does a single Dehn twist.

\begin{lemma}\label{lem:two-regions-mult-one}
    Let $M$ be a horizontally and vertically parabolic translation surface containing a horizontal cylinder $H$ of multiplicity 1.
    Suppose that two vertical cylinders $V_1$ and $V_2$ having irrational ratio of heights $h(V_1)/h(V_2)$ divide $H$ into two rectangular regions $R_1$ and $R_2$.
    Then, if there is a periodic point in the interior of $H$, it must lie at the center of $R_1$ or $R_2$.
\end{lemma}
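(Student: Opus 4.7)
The plan is to apply the Rational Height Lemma (\autoref{lem:rat-ht}) in both the horizontal and vertical directions and then exploit the irrationality of $h(V_1)/h(V_2)$ to tightly constrain the $\langle T_H \rangle$-orbit of any interior periodic point. I would first adopt coordinates on $H$ so that $R_1 = [0, h(V_1)] \times [0, h(H)]$ and $R_2 = [h(V_1), h(V_1) + h(V_2)] \times [0, h(H)]$, with $c(H) = h(V_1) + h(V_2)$.

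First I would apply \autoref{lem:rat-ht} to $H$: for a periodic point $p = (x, y)$ in the interior of $H$, its height satisfies $y/h(H) = p'/q \in \QQ$ in lowest terms with $1 \le p' < q$. Since $H$ has multiplicity $1$, the horizontal multi-twist $T_H$ performs exactly one Dehn twist in $H$ and hence shifts $x$-coordinates by $(p'/q)\, c(H) \bmod c(H)$. The $\langle T_H \rangle$-orbit of $p$ therefore consists of $q$ distinct points with common height $y$ and $x$-coordinates forming the set $\{x + j\, c(H)/q \bmod c(H) : 0 \le j < q\}$. Each of these orbit points is itself periodic and lies in $R_1$ or $R_2$.

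Next I would apply \autoref{lem:rat-ht} in the vertical direction to each orbit point: a point in $R_1 \subset V_1$ must have $x$-coordinate $a\, h(V_1)$ with $a \in \QQ$, while a point in $R_2 \subset V_2$ has $x$-coordinate $h(V_1) + b\, h(V_2)$ with $b \in \QQ$. The main step now uses the irrationality of $h(V_1)/h(V_2)$: if two orbit points both lie in $R_1$, their difference is simultaneously a rational multiple of $h(V_1)$ and, modulo $c(H)$, an element of $(1/q)\ZZ \cdot (h(V_1) + h(V_2))$; applying the $\QQ$-linear independence of $\{h(V_1), h(V_2)\}$ to this equation forces the two points to coincide. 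The same argument handles pairs of orbit points in $R_2$, so each region contains at most one orbit point and $q \le 2$.

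Since $p$ is in the interior of $H$ we have $y \in (0, h(H))$, so $q \ge 2$, giving $q = 2$ and $y = h(H)/2$. The orbit then has exactly one point in $R_1$ and one in $R_2$, and a final application of $\QQ$-linear independence to the equation expressing their difference as $c(H)/2$ modulo $c(H)$ shows that their $x$-coordinates are exactly $h(V_1)/2$ and $h(V_1) + h(V_2)/2$, the centers of $R_1$ and $R_2$ respectively. The only delicate point in the argument is the careful bookkeeping of reductions modulo $c(H)$; once that is done, $\QQ$-linear independence of the cylinder heights $\{h(V_1), h(V_2)\}$ does all the real work.
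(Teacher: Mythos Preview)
Your proof is correct and takes a different route from the paper's argument. The paper normalizes $R_1$ to a unit square and then performs an explicit three-case analysis on where $T_H(p)$ lands (stays in $R_1$, moves to $R_2$, or wraps through $R_2$ back to $R_1$), applying the Rational Height Lemma in each case; only the middle case is consistent with $p$ being interior, and it yields the linear constraint $x+y=1$. Repeating the same case analysis with $T_H^{-1}$ gives $x=y$, and the two lines intersect at the center. You instead reason globally about the $\langle T_H\rangle$-orbit: the $\QQ$-linear independence of $h(V_1)$ and $h(V_2)$ forces each region to contain at most one orbit point, so the denominator $q$ of $y/h(H)$ is exactly $2$, and one final linear-independence computation pins down the $x$-coordinates. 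The paper's version is more hands-on and avoids any orbit bookkeeping; yours sidesteps the case split entirely and makes the role of the irrationality hypothesis more transparent, which is closer in spirit to how one would approach the multi-region generalization in \autoref{lem:interior-per-pt--mult-one--general}.
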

\begin{proof}
    We give the argument for the region $R_1$ as the case of $R_2$ follows by symmetry.
    After a horizontal and vertical rescaling, we may assume that $R_1$ has height and width 1, meaning that the other region $R_2$ has some irrational height $\alpha$.
    Because $H$ has multiplicity 1, the Dehn twist $T_H \in \SL(M)$ has the form
    \[T_H \coloneqq \begin{pmatrix} 1 & 1 + \alpha \\ 0 & 1 \end{pmatrix}.\]
    Choose Euclidean coordinates on $H$ with the lower-left corner of $R_1$ being the origin, and consider a periodic point $p = (x, y) \in R_1$.
    The Rational Height Lemma \ref{lem:rat-ht} applied to $H$ and $V_1$ implies that both $x$ and $y$ are rational.

    We claim that if $p$ is a periodic point interior to $H$, it must lie along the diagonal segment $y = -x + 1$.
    There are three possible cases for the image of $p$ under the Dehn twist $T_H$, as illustrated in \autoref{fig:two-dts}:
    \begin{figure}[ht]
        \centering
        \includegraphics[scale=0.7]{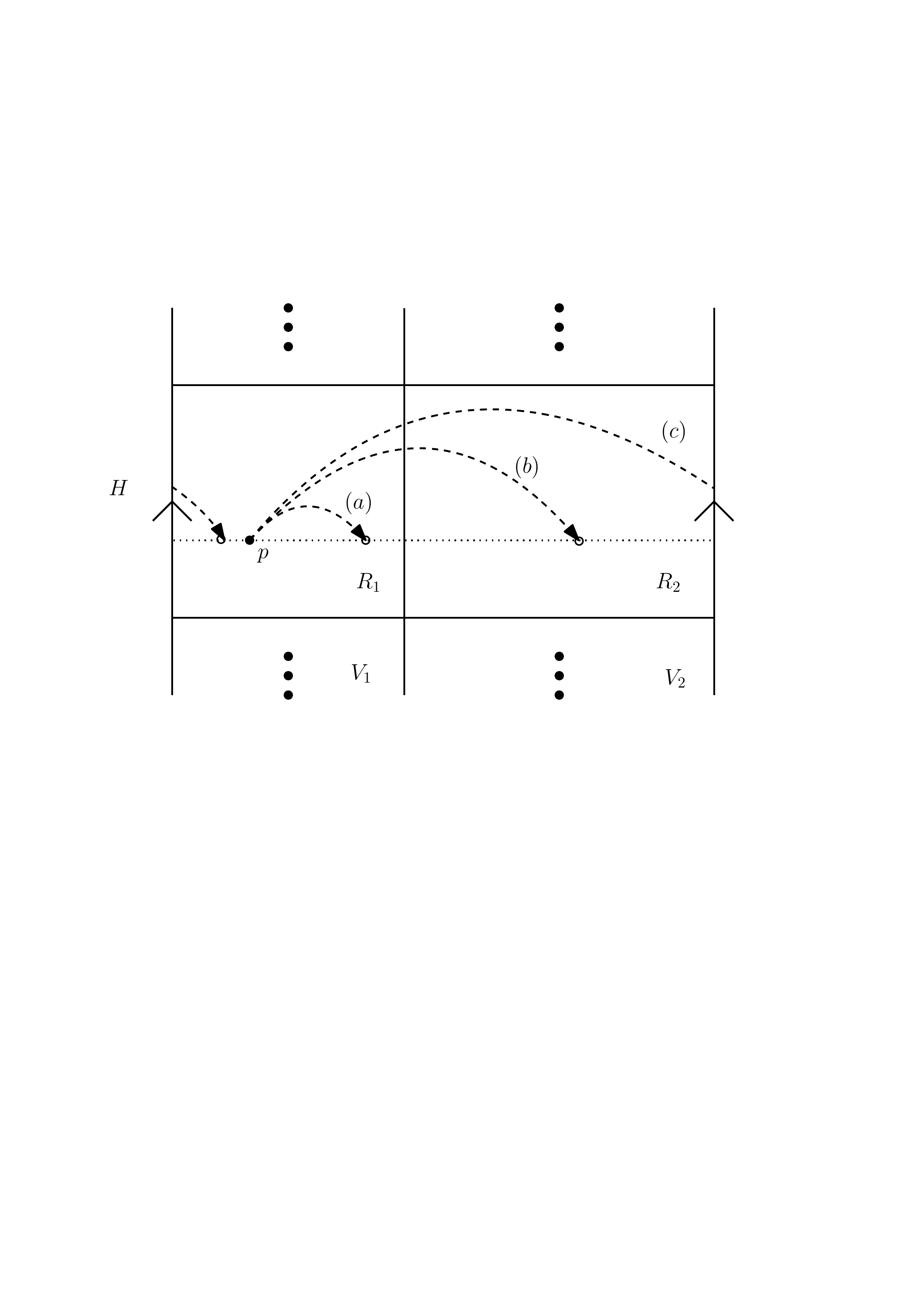}
        \caption{The three possibilities for the image of $p$ under the horizontal twist $T_H$.}
        \label{fig:two-dts}
    \end{figure}

    \begin{itemize}
        \item[(a)] \textbf{The point $p$ remains in $R_1$ under $T_H$:}
        Since the coordinates of $T_H(p)$ are $(x + (1 + \alpha)y, y)$, the height of $T_H(p)$ in $V_1$ is $x + (1 + \alpha)y$.
        The Rational Height Lemma \ref{lem:rat-ht} in $V_1$ implies that $x + (1 + \alpha)y$ is rational.
        Since $x$ and $y$ are rational, $\alpha y$ must also be rational.
        But $\alpha \notin \QQ$ and $y \in \QQ$, so we must have $y = 0$ and $p$ is not an interior point.
        This contradicts our hypotheses, so this case does not occur.

        \item[(b)] \textbf{The point $p$ moves to $R_2$ under $T_H$:}
        Since $T_H(p)$ has height $[x + (1 + \alpha)y] - 1$ in $V_2$, the Rational Height Lemma \ref{lem:rat-ht} in $V_2$ implies that $(x + (1 + \alpha)y) - 1)\alpha^{-1}$ is rational.
        Since $y$ is rational, we also have that $(x + y - 1)\alpha^{-1}$ is rational.
        But $x + y - 1$ is rational and $\alpha^{-1}$ is irrational by assumption, so we conclude $x + y - 1 = 0$.

        \item[(c)] \textbf{$p$ wraps around $H$ through $R_2$ and returns to $R_1$:}
        In this case, $T_H(p)$ has height $[x + (1 + \alpha)y] - (1 + \alpha)$ in $V_1$.
        The Rational Height Lemma in $V_1$ implies that $x + (1 + \alpha)y - (1 + \alpha)$ is rational.
        This means $(1 + \alpha)(y - 1)$ is rational, but $(y - 1)$ is rational and $(1 + \alpha)$ is irrational.
        We conclude that we must have $y = 1$, and $p$ is not an interior point.
        This again contradicts our hypotheses, so this case does not occur.
    \end{itemize}
    We conclude that an interior periodic point $p$ satisfies the equation $y = -x + 1$.

    We can now repeat the argument with the inverse horizontal multi-twist $T_H^{-1}$.
    Reasoning as above, the point $p$ is periodic precisely when $T_H^{-1}$ lies in $R_2$.
    The Rational Height Lemma implies $[x - (1 + \alpha)y]\alpha^{-1} \in \QQ$.
    This means $(x - y)\alpha^{-1}$ is rational, forcing $x = y$.

    Since $p$ lies on $y = -x + 1$ and also $y = x$, we conclude that $p$ lies at the center of $R_1$.
\end{proof}

We require a more general form of \autoref{lem:two-regions-mult-one} that allows for $H$ to consist of more than two regions.
Recall that two parallel cylinders $C_1$ and $C_2$ in a parabolic direction are by definition \emph{commensurable}, i.e., their moduli $h(C_1)/c(C_1)$ and $h(C_2)/c(C_2)$ are rationally related.
If we have the stronger condition that their heights $h(C_1)$ and $h(C_2)$ are also rationally related, then we will say that $C_1$ and $C_2$ are \emph{strongly commensurable}.

\begin{lemma}\label{lem:interior-per-pt--mult-one--general}
Let $M$ be a horizontally and vertically parabolic translation surface having a horizontal cylinder $H$ with multiplicity 1.
Let $\mathcal{V}_1$ and $\mathcal{V}_2$ be two distinct families of strongly commensurable vertical cylinders such that
\begin{itemize}
    \item For $i \in \{1, 2\}$, the intersection of $\bigcup_{V \in \mathcal{V}_i} \overline{V}$ with the interior of $H$ is a (connected) rectangle $R_i$ , and
    \item $\overline{H} = R_1 \cup R_2$.
\end{itemize}
Then, any periodic point in the interior of $H$ must lie at the center of $R_1$ or $R_2$.
\end{lemma}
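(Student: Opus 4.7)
The plan is to mirror the proof of \autoref{lem:two-regions-mult-one}, using strong commensurability within each family $\mathcal{V}_i$ to reduce the Rational Height Lemma conclusions to the same arithmetic constraints as in the two-cylinder case. By symmetry, I would treat periodic points in $R_1$ and deduce the case of $R_2$ in the same way.

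First, I would fix for each $i \in \{1,2\}$ a positive real $h_i$ such that every $V \in \mathcal{V}_i$ satisfies $h(V) \in \QQ \cdot h_i$; such an $h_i$ exists by strong commensurability within $\mathcal{V}_i$. After rescaling horizontally and vertically so that $R_1$ is the unit square, the width of $R_2$ becomes some $\alpha > 0$. I would argue that $\alpha \notin \QQ$: otherwise $\mathcal{V}_1 \cup \mathcal{V}_2$ would form a single strongly commensurable family, contradicting the distinctness of $\mathcal{V}_1$ and $\mathcal{V}_2$. Under this normalization, $h_1 \in \QQ$, $h_2 \in \QQ \cdot \alpha$, and the multiplicity-one horizontal twist takes exactly the form $T_H = \bigl(\begin{smallmatrix} 1 & 1+\alpha \\ 0 & 1 \end{smallmatrix}\bigr)$ used in the earlier lemma.

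Next, for an interior periodic point $p = (x,y) \in R_1$, I would apply the Rational Height Lemma to $H$ to obtain $y \in \QQ$, and to the unique $V \in \mathcal{V}_1$ containing $p$ to obtain $x \in \QQ$; here the left endpoint of $V$ inside $R_1$ is a sum of widths $h(V') \in \QQ \cdot h_1 \subset \QQ$, so the rationality of $x$ follows cleanly. I would then repeat the three-case analysis of $T_H(p) = (x + (1+\alpha)y,\, y)$: either the image remains in $R_1$, moves into $R_2$, or wraps once through $R_2$ back to $R_1$. In each case, the Rational Height Lemma applied to the specific cylinder of $\mathcal{V}_1$ or $\mathcal{V}_2$ containing $T_H(p)$ yields a condition either of the form ``first coordinate $\in \QQ$'' (landing in a $\mathcal{V}_1$-cylinder) or ``first coordinate $\in 1 + \QQ \cdot \alpha$'' (landing in a $\mathcal{V}_2$-cylinder).

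The main obstacle is verifying that this reduction really produces the same linear relations in $x, y, \alpha$ as in \autoref{lem:two-regions-mult-one}. Strong commensurability is precisely what absorbs the choice of which cylinder in $\mathcal{V}_i$ contains $T_H(p)$ into the ambient rational or $\QQ \cdot \alpha$ data, so the arithmetic collapses to the two-cylinder arithmetic. The three cases then force $y = 0$, $y = 1 - x$, or $y = 1$ respectively; interiority rules out the first and last, leaving $p$ on the anti-diagonal $\{y = 1 - x\}$ of $R_1$. A symmetric application of $T_H^{-1}$ forces $y = x$, and combining these two linear conditions places $p$ at the center of $R_1$.
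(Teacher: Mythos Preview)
Your proposal is correct and takes essentially the same approach as the paper, which simply states that the proof follows that of \autoref{lem:two-regions-mult-one}. Your reading of ``distinct families'' as ``not strongly commensurable with each other'' is the intended hypothesis (generalizing the irrational-ratio condition of the earlier lemma), and your use of strong commensurability to absorb the choice of cylinder within each $\mathcal{V}_i$ into the ambient $\QQ$- or $\QQ\cdot\alpha$-data is exactly the point that makes the three-case arithmetic collapse to the two-region computation.
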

The proof follows that of \autoref{lem:two-regions-mult-one}.
\begin{remark}\label{rem:lemma-appears-in-apisa}
    Part of the above Lemma appears in Apisa \cite{ApisaGL2R}*{Lemma 5.5} for \emph{generic} translation surfaces, i.e., those with $\GL(2, \RR)$-orbits dense in their strata.
\end{remark}

\section{Proof of \autoref{thm:main_thm}}\label{sec:main_thm}
\subsection{Overview of the proof}\label{subsec:proof-overview}
Because we can use the algorithm of Chowdhury--Everett--Freedman--Lee \cite{icerm} to compute the periodic points of any prototypical eigenform $M(w, e)$, it suffices to assume that $D$ is sufficiently large.
(Our argument is effective in that it computes the specific lower bounds on $D$.)

For such a prototype $M(w, e)$, we first classify the periodic points that lie in the interior of a horizontal or vertical cylinder.
The classification leverages that a periodic point on a translation surface lies at rational height in any parabolic cylinder decomposition containing it (see the Rational Height Lemma, \autoref{lem:rat-ht}).
In genus 2 and 3 the above suffices to classify all periodic points on the interiors of these cylinders, whereas in genus 4 we require an ad hoc argument to rule out eight more interior points (see \autoref{lem:g4-remove-interior-per-pt}).

The remaining locations that could contain periodic points are the boundary saddle connections of the cylinders.
Because the discriminant $D$ is sufficiently large, we can construct a new cylinder decomposition with the butterfly move $B_2$ (see \autoref{subsec:butterfly-moves}).
After applying the Rational Height Lemma in this new decomposition, we find that the midpoint of each boundary saddle connection is its lone candidate periodic point.

While in genus 2 those midpoints are periodic points, in genera 3 and 4 we must rule out some of them as being periodic.
For this we track the midpoints under a particular butterfly move $B_q$, where $q \in \{2, 4, 8\}$ is a function of the prototype $(w, e)$.
But for the same hypotheses on $M(w, e)$ to hold on $B_q(M(w, e))$, i.e. that $B_q(M(w, e))$ has \emph{zero tilt} or rectilinear, certain arithmetic conditions on the integers $(w, e)$ must hold.
In \autoref{appendix:good-prototypes}, we show that every connected component of the Prym eigenform locus $\Omega E_D(4)$ and $\Omega E_D(6)$ contain such a \emph{good prototype}.
After applying the move $B_q$, we can conclude that midpoints are not periodic, completing the proof.

\subsection{Genus 2}\label{subsec:g2}
For positive $w \in \ZZ$ and $e \in \ZZ$, consider the L-shaped translation surface $L(w, e)$ as shown in \autoref{fig:prym2}.
It consists of a $\lambda \times \lambda$ square attached to a $w \times 1$ rectangle, where $\lambda = (e + \sqrt{e^2 + 4w})/2$.
\begin{figure}[ht]
    \centering
    \includegraphics[scale=0.8]{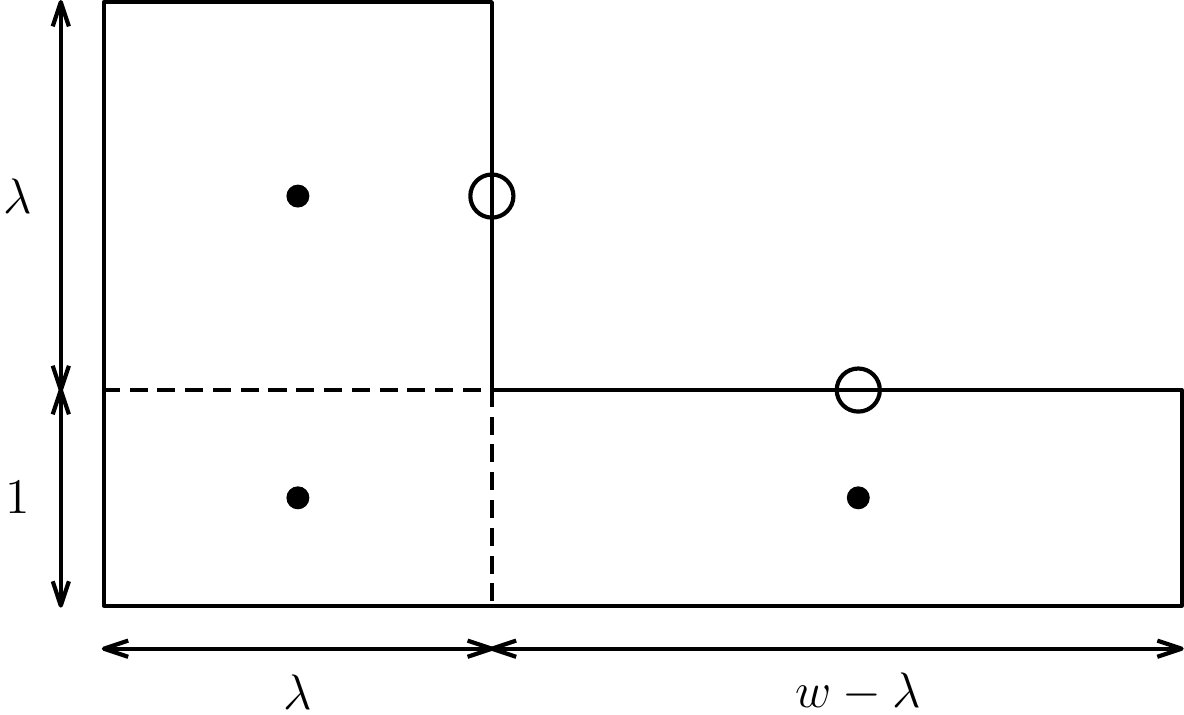}
    \caption{The L-shaped Prym eigenform $L(w, e)$ with $\lambda = (e + \sqrt{D})/2$. The marked points are the Weierstrass points.}
    \label{fig:prym2}
\end{figure}
The surface $L(w, e)$ decomposes into two horizontal cylinders: a \emph{long} cylinder of multiplicity 1 and a \emph{short} simple cylinder of multiplicity greater than 1.
McMullen \cite{McMullenEigen} proved that for certain choices of $w$ and $e$, $L(w, e)$ is a Veech surface generating a Teichm\"uller curve of discriminant $D = e^2 + 4w$.
He \cite{McMullenDiscSpin} also classified the connected components of the eigenform loci: the locus $\Omega E_D(2)$ has one component when $D\not\equiv 1 \pmod{8}$ and has two components when $D\equiv 1 \pmod{8}$.
Given this classification, it suffices to consider the Prym eigenforms $L(w, e)$ with parameters $(w, e)$ equal to
\begin{itemize}
    \item $((D - 4) / 4, -2)$ when $D\equiv 0 \pmod{4}$,
    \item $((D - 1)/4, -1)$ when $D\equiv 5 \pmod{8}$, and
    \item $((D - 1)/4, -1)$ and $((D - 9)/4, -3)$ when $D\equiv 1 \pmod{8}$.
\end{itemize}

\begin{proof}[Proof of \autoref{thm:main_thm} in genus 2]
When $D > 17$, there is a prototypical surface $L(w, e)$ for which the butterfly move $B_2$ is admissible and yields a cylinder decomposition of the same combinatorial type (see McMullen \cite{McMullenDiscSpin}*{Theorem 7.2}).
In the remaining cases when $D \in \{5, 12, 17\}$, then we use the algorithm of Chowdhury--Everett--Freedman--Lee \cite{icerm} to compute the periodic points.

Applying \autoref{lem:two-regions-mult-one} with the long horizontal cylinder and the long vertical cylinder, we find that the potential interior periodic points are the solid dots in \autoref{fig:prym4-disc53}.
The Prym involution fixes all three of these points, so they are in fact periodic.

These two cylinders cover the entire surface except for the long horizontal saddle connection of the long horizontal cylinder and the long vertical saddle connection of the long vertical cylinder.
We first show that the unique periodic point along the long horizontal saddle connection is its midpoint.
As the butterfly move $B_2$ is admissible, there is another two-cylinder decomposition in the direction of the slope $2/w$.
See \autoref{fig:g2-b2-cylinder-decompositions} for two examples.
\begin{figure}[ht]
    \centering
    \includegraphics[scale=0.4]{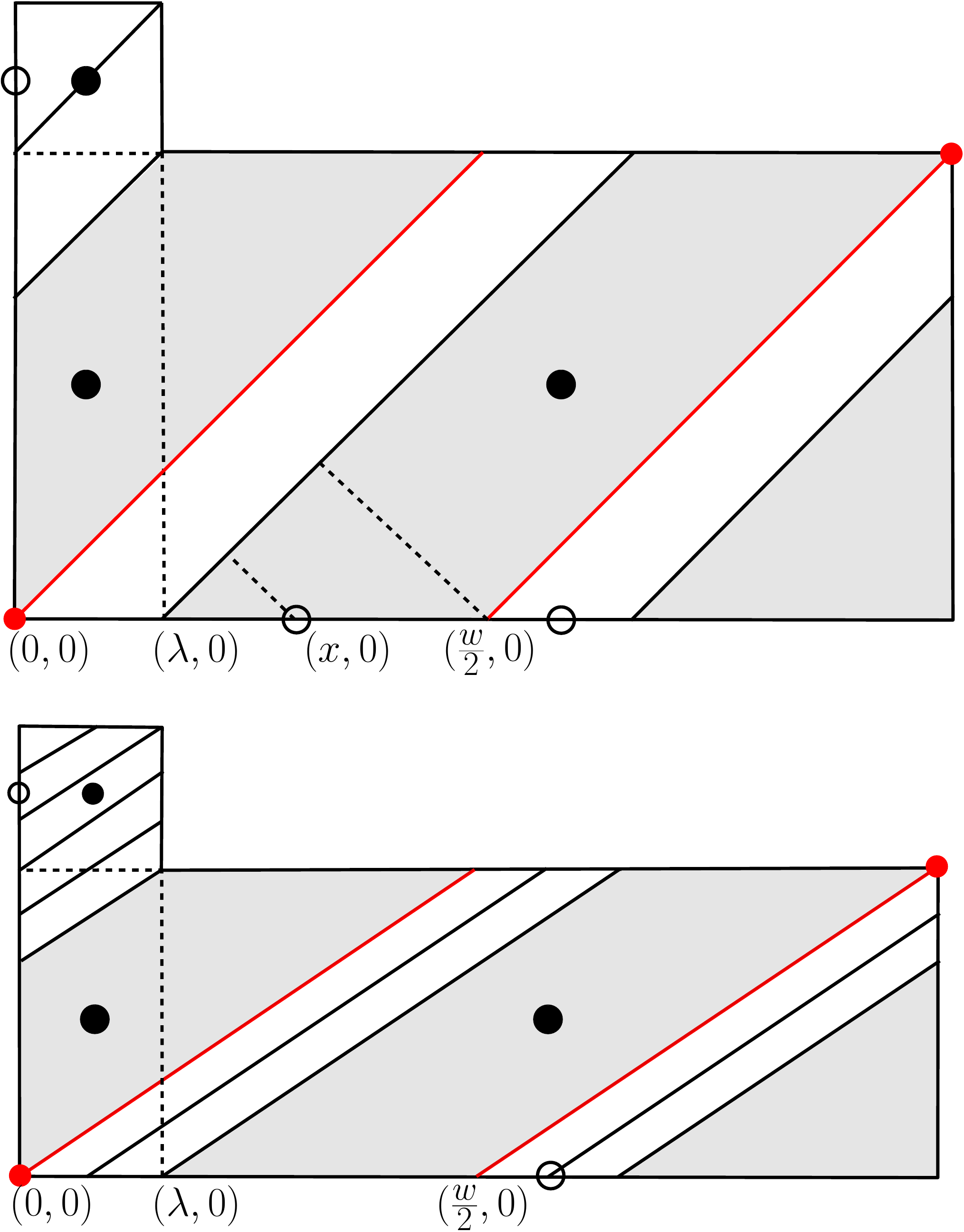}
    \caption{Cylinder decompositions for the eigenforms $L(2, -6)$ and $L(3, -6)$ in the direction of the butterfly move $B_2$.
    The parity of $w$ determines the winding of the long butterfly cylinder.}
    \label{fig:g2-b2-cylinder-decompositions}
\end{figure}
The butterfly saddle connections cuts the long horizontal saddle connection into either three or four components depending on the parity of $w$.

Now fix a periodic point $P = (x, 0)$ along the long horizontal saddle connection, taking the origin as the lower left corner of the long horizontal cylinder.
See \autoref{fig:g2-b2-cylinder-decompositions}.
By applying the Prym involution, we can assume that $\lambda \le x \le (w - \lambda)/2$.
The Rational Height Lemma (\autoref{lem:rat-ht}) in the vertical short cylinder implies that $(x - \lambda) / (w - \lambda)$ is rational, i.e., $x - \lambda = q(w - \lambda)$ for some $q \in \QQ$.

The point $p$ lies in either the short butterfly cylinder or the long butterfly cylinder.
In the first case, the Rational Height Lemma and similar triangles implies
\begin{equation*}
    \frac{x - \lambda}{w/2 - \lambda} = \frac{q(w - \lambda)}{w/2 - \lambda} = \frac{q(w/2 - \lambda) + q w/2}{w/2 - \lambda} = q + q \frac{w/2}{w/2 - \lambda}
\end{equation*}
is rational.
The unique solution is $q = 0$, i.e., $p$ is the left-hand endpoint of the horizontal saddle connection.

In the second case, when $w$ is even we have
\begin{equation}\label{eq:g2--rat-ht--int-long-cyl}
    \frac{x - w/2}{\lambda} = \frac{[\lambda + q(w - \lambda)] - w/2}{\lambda} = 1 - q + (q - 1/2)\frac{w}{\lambda}
\end{equation}
is rational.
The unique solution is $q = 1/2$, meaning $p = (\lambda + (w - \lambda)/2, 0)$ is the fixed point on the long horizontal saddle connection.
When $w$ is odd, we replace the denominator of \autoref{eq:g2--rat-ht--int-long-cyl} with $\lambda/2$ and get the same conclusion.

We conclude that the midpoint of the long horizontal saddle connection is the sole point that can be periodic, which holds because it's a fixed point of the Prym involution.

We can now apply the butterfly move $B_\infty$ (see \autoref{subsec:butterfly-moves}) and repeat the same calculations in the new horizontal direction, noting that the move $B_2$ is again admissible on $B_\infty(w, e)$.
\end{proof}

\subsection{Genus 3}\label{subsec:g3}
Consider the $S$-shaped translation surface $S(w, e)$ shown in \autoref{fig:prym3-prototypes}.
\begin{figure}[t]
    \centering
    \includegraphics[scale=0.6]{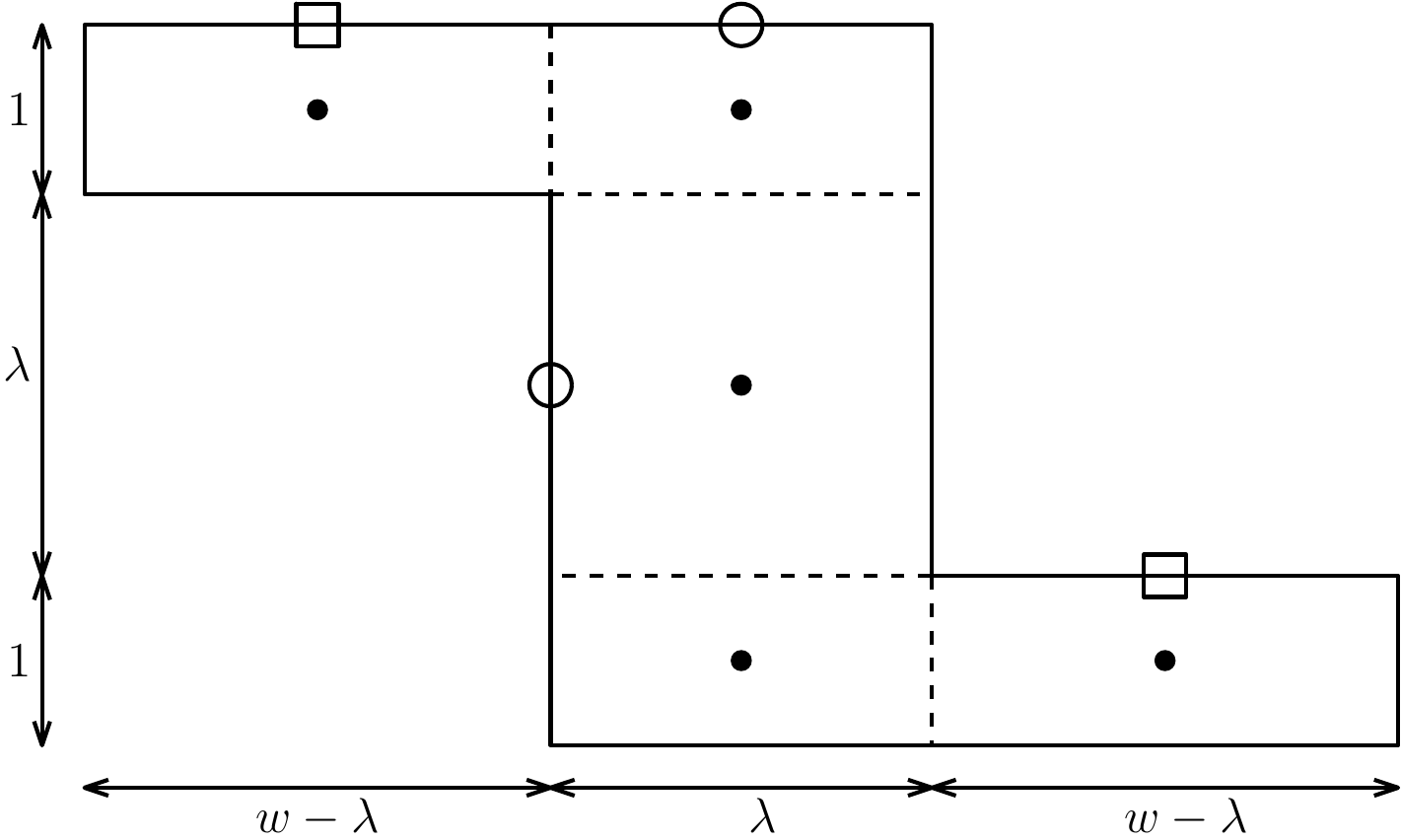}
    \caption{The prototypical surface $S(w, e)$. In contract with genus 2, the points marked with squares are not periodic.}
    \label{fig:prym3-prototypes}
\end{figure}
It consists of a $\lambda \times \lambda$ square attached to two $w \times 1$ rectangles, where now $\lambda = (e + \sqrt{e^2 + 8w})/2$.
In the horizontal direction the two $w \times 1$ form two \emph{long} cylinders and the central square forms a \emph{short} simple cylinder.
It follows that the global horizontal multi-twist has the form $\begin{bmatrix} 1 & w \\ 0 & 1 \end{bmatrix}$, the long cylinders have multiplicity one, and the short cylinder has multiplicity $w$.
Rotating and rescaling $S(w, e)$ with $B_\infty$, we see that $S(w, e)$ has one \emph{long} vertical cylinder of multiplicity one and two \emph{short} simple vertical cylinders with multiplicities greater than one.
McMullen \cite{McMullenPrym} showed that for certain values of $w$ and $e$, the surface $S(w, e)$ is a Prym eigenform in $\Omega E_D(4)$ that generates a Teichmüller curve of discriminant $D = e^2 + 8w$.
(There is an exception for discriminant $D = 8$ which requires a different polygonal model.)

We now recall the classification of connected components of $\Omega E_D(4)$ due to Lanneau--Nguyen \cite{LanneauNguyen}.
When $D < 17$, the two nonempty eigenform loci are $\Omega E_8(4)$ and $\Omega E_{12}(4)$, and they are both connected.
For higher discriminants the locus $\Omega E_D(4)$ is nonempty precisely when $D\equiv 0, 1, \text{or}\ 4\pmod{8}$.
It has two components when $D\equiv 1\pmod 8$ and one component otherwise.

We call a prototype $(w, e)$ \emph{good} if one of the following three situations occurs:
\begin{itemize}
    \item We have $w \equiv 1 \pmod2$ and the butterfly move $B_2$ is admissible.
    \item We have $w \equiv 2 \pmod4$ and the butterfly move $B_4$ is admissible.
    \item We have $w \equiv 4 \pmod8$ and the butterfly move $B_8$ is admissible.
\end{itemize}
Let $B_q$ denotes the admissible butterfly move in each case.
One reason why such $(w, e)$ are good is that resulting prototypes $B_q(w, e) \coloneqq (w', h', t', e')$ have no tilt, i.e., $t' = 0$.
For the calculation, see \autoref{appendix:g3--good-prototypes}.

We claim that for all discriminants $D$ outside an explicit finite set, we can assume our prototypical eigenform $S(w, e)$ has a good prototype:
\begin{lemma}\label{lem:g3--good-prototypes}
    Let $P_{\text{bad}} \coloneqq \lbrace 17$, $20$, $24$, $28$, $32$, $33$, $40$, $41$, $48$, $52$, $56$, $68$, $80$, $84$, $96$, $112$, $128$, $132$, $164$, $228$, $260$, $292$, $388$, $452 \rbrace$.
    For each discriminant $D \notin S_{\text{bad}}$, every connected component of $\Omega E_D(4)$ contains a good prototype $S(w, e)$.
\end{lemma}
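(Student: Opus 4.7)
The plan is to reduce the lemma to a Diophantine existence problem. Prototypes in $\Omega E_D(4)$ are labeled by integers $(w, e)$ satisfying $D = e^2 + 8w$ together with the Lanneau--Nguyen integrality/inequality constraints, so for each connected component and each $D \notin P_{\text{bad}}$ it suffices to exhibit $(w, e)$ with $w$ in one of the three good residue classes ($w$ odd, $w \equiv 2 \pmod 4$, or $w \equiv 4 \pmod 8$) and with the matching butterfly move $B_q$, $q \in \{2, 4, 8\}$, admissible, i.e.\ $(e + 2q)^2 < D$.

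The first step is a residue computation. The Lanneau--Nguyen classification admits only $D \equiv 0, 1$, or $4 \pmod 8$, and since $w = (D - e^2)/8$, the class of $w \pmod 8$ is determined by the class of $e$ modulo a small power of $2$ (depending on $D \pmod 8$). I would tabulate the induced map $e \pmod{16} \mapsto w \pmod 8$ in each case of $D \pmod 8$ and identify the residue classes of $e$ producing a good $w$; the table shows that at least one such residue class exists for every admissible $D$, and it simultaneously records which butterfly parameter $q$ is to be matched with it.

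The second step is an admissibility count. The conditions $w > 0$ and $(e + 2q)^2 < D$ confine $e$ to an open interval of length on the order of $\sqrt{D}$, and the Lanneau--Nguyen prototype range (roughly $e + \lambda < w$) trims this interval by only a bounded amount. Once the resulting length exceeds the modulus appearing in the residue analysis, the interval is guaranteed to contain an integer of the required class, and both conditions of goodness are met. This produces an explicit threshold $D_0$ beyond which every component of $\Omega E_D(4)$ contains a good prototype; the finitely many exceptional $D < D_0$ are then the entries of $P_{\text{bad}}$, which I would confirm by direct enumeration.

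The main difficulty is the case $D \equiv 1 \pmod 8$, where $\Omega E_D(4)$ splits into two connected components distinguished by a Lanneau--Nguyen parity-type invariant. Within the admissibility window one must locate representatives $e$ in \emph{both} components, which requires analyzing how shifting $e$ by $8$ or $16$ affects the invariant while preserving the good residue of $w$. This is the step where the bookkeeping gets delicate: the shift may push $e$ out of the admissibility interval when $\sqrt{D}$ is close to $D_0$, and most entries of $P_{\text{bad}}$ arise precisely from these small discriminants where one of the two components has no good prototype while the other does.
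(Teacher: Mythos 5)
Your proposal follows essentially the same route as the paper: a residue table matching classes of $e$ (hence of $w=(D-e^2)/8$) to the good congruence classes of $w$, an admissibility bound that holds once $D$ exceeds an explicit threshold, and computer enumeration of the finitely many exceptional discriminants. Two details to correct. First, for the genus-3 prototypes the admissibility criterion used in the paper is $(e+4q)^2<D$, not $(e+2q)^2<D$; with $q$ as large as $8$ this changes the threshold to $D>900$, and using the weaker inequality would make your enumeration of $P_{\text{bad}}$ come out wrong. Second, the two components for $D\equiv 1\pmod 8$ are handled in the paper simply by taking both signs $e=\pm 3$ (resp.\ $e=\pm 1$) in the relevant rows, rather than by shifting $e$ by $8$ or $16$ and tracking the spin-type invariant; your version of this step is left unresolved and is exactly the bookkeeping that the fixed small choices of $e$ avoid, since for $D>900$ the admissibility window trivially contains these specific values.
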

For a proof, see \autoref{appendix:g3--good-prototypes}.

\begin{proof}[Proof of \autoref{thm:main_thm} in genus 3]
When $D \in P_{\text{bad}}$, then we use the algorithm of Chowdhury--Everett--Freedman--Lee \cite{icerm} to compute the periodic points on some eigenform $S(w, e)$ in each connected component of $\Omega_D(4)$.
Otherwise, let $S(w, e)$ be a good prototypical eigenform, and let $q \in \{2, 4, 8\}$ denote the integer for which $w \equiv q/2 \pmod{q}$.

Applying \autoref{lem:interior-per-pt--mult-one--general} to the long horizontal cylinders, we see that the interior points of these cylinders that can be periodic are the four points marked with solid dots.
Similarly, applying \autoref{lem:interior-per-pt--mult-one--general} to the long vertical cylinder, we see that the interior points of these cylinders that can be periodic are the two points marked with open dots.
As all four of the open dots move into the long horizontal cylinder under a vertical multi-twist, we conclude that none of them are periodic.
In particular, the periodic points in the interiors of these 3 cylinders are the fixed points of the Prym involution.

The 3 long cylinders cover the entire surface except for the long horizontal saddle connection in the long horizontal cylinders and the long vertical saddle connection in the long vertical cylinder.
First consider the long horizontal saddle connection of the bottom long cylinder.
Because the butterfly move $B_q$ is admissible by hypothesis, the move $B_2$ is also admissible.
As in the genus 2 case we can also decompose the surface by saddle connections with slope $2/w$.
See \autoref{fig:g3-b2-cylinder-decompositions} for two examples.
\begin{figure}[ht]
    \centering
    \includegraphics[scale=.8]{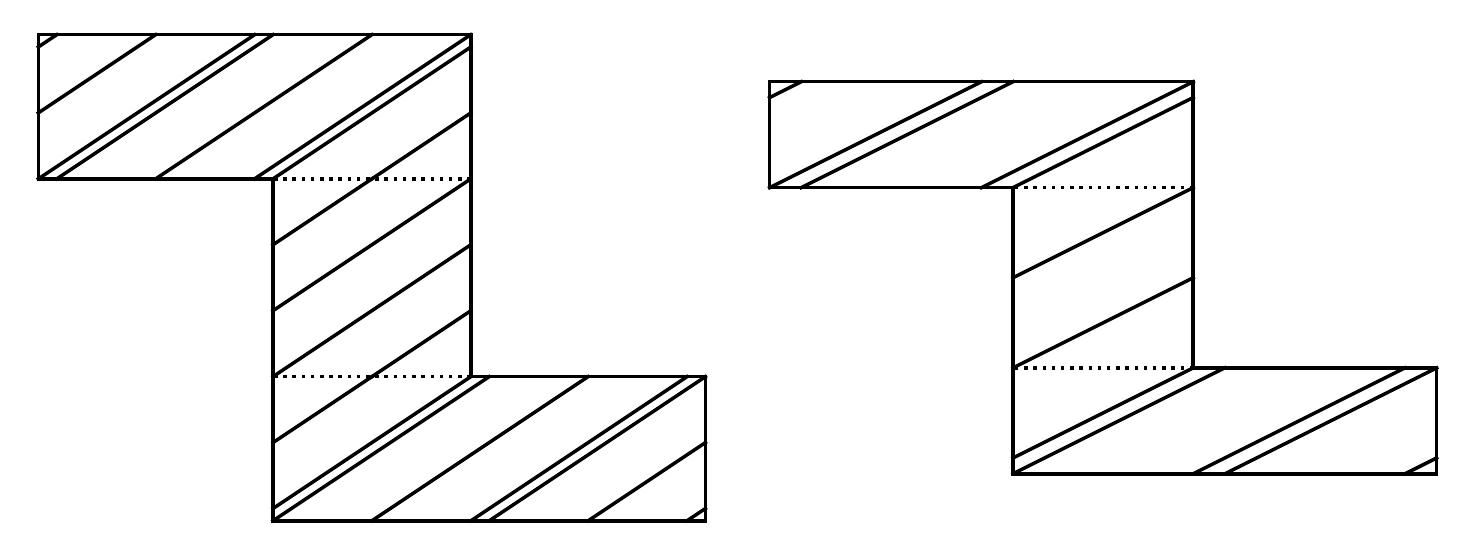}
    \caption{Cylinder decompositions for the eigenforms $S(3, -3)$ and $S(4, -3)$ in the direction of the butterfly move $B_2$.}
    \label{fig:g3-b2-cylinder-decompositions}
\end{figure}

Identical computations as in the genus 2 case show that the midpoint $m$ of the horizontal saddle connection is the sole point that can be periodic.
We now show that $m$ is in fact not periodic.
\begin{lemma}
    The midpoint $m$ lies on a saddle connection $\gamma$ that is the boundary of a $B_q$ butterfly cylinder and passes through a fixed point $s$ of the Prym involution.
\end{lemma}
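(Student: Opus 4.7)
The plan is to exhibit $\gamma$ as an explicit straight-line trajectory in the polygonal model of $S(w,e)$ and verify by direct coordinate computation that it contains both the midpoint $m$ and some fixed point $s$ of the Prym involution.

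First I would fix Euclidean coordinates on $S(w,e)$ placing the lower-left corner of the bottom long horizontal cylinder at the origin, so that $m = (w/2, 0)$. By the description of the butterfly move in \autoref{subsec:butterfly-moves}, a boundary saddle connection of the $B_q$-butterfly cylinder is obtained by starting at an appropriate cone point and following the straight-line trajectory of slope $q/w$ across the polygon's edge identifications. The \emph{good prototype} condition $w \equiv q/2 \pmod{q}$ is precisely what makes this trajectory close into a genuine saddle connection; this would be verified by direct inspection in each of the three cases $q \in \{2, 4, 8\}$.

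Next, I would track $\gamma$ through the horizontal strips of $S(w,e)$. Each long horizontal cylinder has height $1$, so within each strip the trajectory advances horizontally by $w/q$, and the identifications between strips translate the $x$-coordinate by amounts determined by $\lambda = (e + \sqrt{D})/2$. A short computation using $q/2 \mid w$ should show that after the appropriate number of wrappings, $\gamma$ re-enters the bottom long horizontal cylinder precisely at $(w/2, 0) = m$. The Prym fixed points $s$ to be considered are the centers of symmetry of the rectangular pieces of $S(w, e)$, namely the center of the long vertical cylinder and the centers of the regions inside the long horizontal cylinders (cf. \autoref{fig:prym3-prototypes}). Using the same parametrization of $\gamma$, one of these centers should be shown to lie on $\gamma$: intuitively the center of the long vertical cylinder for $q = 2$, and a horizontal-cylinder center for the larger values of $q$.

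The main obstacle is the case-by-case bookkeeping: the exact cone point from which $\gamma$ emanates, the number of strips $\gamma$ crosses before closing up, and the specific Prym fixed point it meets all depend on the residue of $w$ modulo $q$. The good-prototype condition of \autoref{appendix:g3--good-prototypes} is calibrated precisely so that in each of the three cases the necessary divisibilities and congruences align simultaneously for both $m$ and $s$; verifying this alignment in coordinates, and making sure that $\gamma$ does not accidentally hit a cone point before reaching $s$, is where I expect the bulk of the work to lie.
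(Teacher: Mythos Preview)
Your overall strategy---track a slope-$q/w$ trajectory through the polygon and verify by coordinates that it hits both $m$ and a Prym fixed point---is exactly what the paper does. But two of your specific guesses are off, and they would send you into unnecessary casework. First, the midpoint $m$ of the long horizontal saddle connection is not at $(w/2,0)$: that saddle connection has length $w-\lambda$ and starts at $x=\lambda$, so $m=\bigl((\lambda+w)/2,\,0\bigr)$. Second, the relevant fixed point is the \emph{same} for all $q\in\{2,4,8\}$, namely $s=(\lambda/2,0)$, the Prym fixed point on the bottom boundary lying inside the long vertical cylinder; it is not a horizontal-cylinder center for larger $q$.

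With these corrections the argument becomes uniform rather than case-by-case. Start the geodesic $\gamma$ of slope $q/w$ at $s=(\lambda/2,0)$. After wrapping $q/2$ times through the bottom long cylinder it reaches $m$ (holonomy $(w/2,\,q/2)$), and after $q/2$ more wraps it arrives at $(\lambda/2,1)$, the bottom edge of the central $\lambda\times\lambda$ square. The only thing left is to see that $\gamma$ then closes up at a cone point after winding in the square, and here the good-prototype condition $w=q/2+kq$ is used once: it gives the identity
\[
\frac{q}{w}=\frac{q}{q/2+kq}=\frac{1}{1/2+k}=\frac{\lambda}{\lambda/2+k\lambda},
\]
so the slope-$q/w$ line through $(\lambda/2,1)$ wraps exactly $k$ times in the square and terminates at its upper-right vertex. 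No separate treatment of $q=2,4,8$ is needed, and no careful avoidance of intermediate cone points is required beyond this single computation.
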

\begin{proof}
    Recall that the butterfly cylinders have slope $q/w$.
    Consider the geodesic $\gamma$ with slope $q/w$ that starts at the bottom fixed point $s$ in the interior of the long vertical cylinder; see \autoref{fig:g3--fake-wp} for an example.
    \begin{figure}[ht]
        \centering
        \includegraphics[width=\textwidth]{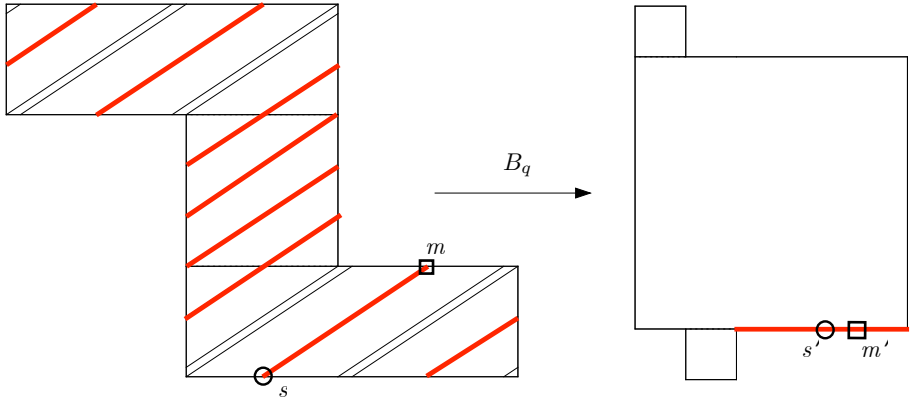}
        \caption{The butterfly move $B_q$ maps $m$ to a point having irrational height on $B_q(w, e)$.}
        \label{fig:g3--fake-wp}
    \end{figure}
    In flat coordinates with the origin at the lower left corner of the long horizontal cylinder, the fixed point has coordinates $(\lambda/2, 0)$ and $m$ has coordinates $(\lambda + (w - \lambda)/2, 0)$.
    Note that $\gamma$ wraps $q/2$ times in the long horizontal cylinder, passes through $m$, then wraps $q/2$ more times before reaching the point $(\lambda/2, 1)$ at the boundary of the central $\lambda \times \lambda$ square.

    We claim that there is a $k \in \NN$ for which $\frac{\lambda}{\lambda/2 + k\lambda} = \frac{q}{w}$, and from this it follows that $\gamma$ wraps $k$ times in the central square before ending at its upper-right vertex.
    To prove the claim, first write $w = q/2 + kq$ for $k \in \NN$ since $(w, e)$ is good.
    Then
    \[
        \frac{q}{w} = \frac{q}{q/2 + kq} = \frac{1}{1/2 + k} = \frac{\lambda}{\lambda/2 + k \lambda},
    \]
    and we're done.
\end{proof}
Let $v$ be the holonomy vector $\begin{bmatrix} w/2 & q/2 \end{bmatrix}^T$ having slope $q / w$ based at the fixed point $s$ with tip at $m$.
Because $v$ maps to (a segment of) the holonomy vector of a horizontal saddle connection, under $B_q$ we compute that $v$ maps to
\begin{align*}
    \frac{\lambda'}{2} \begin{bmatrix} w & \lambda \\ q & 1 \end{bmatrix}^{-1} \begin{bmatrix} w/2 \\ q/2 \end{bmatrix} = \frac{\lambda'}{2w - 2q \lambda}\begin{bmatrix} 1 & -\lambda \\ -q & w \end{bmatrix} \begin{bmatrix} w/2 \\ q/2 \end{bmatrix} = \dots = \begin{bmatrix} \lambda'/4 \\ 0 \end{bmatrix}.
\end{align*}
This implies that $m$ maps $\lambda'/4$ units right of the corresponding fixed point $s'$ on the surface $B_q(w, e)$.
This point has height $(w' - \lambda')/2 + \lambda'/4 = w'/2 - \lambda'/4$ in the new short vertical cylinder having height $w' - \lambda'$.
Since
\[
    \frac{w'/2 - \lambda'/4}{w' - \lambda'} = 1/2 + \frac{\lambda'/4}{w' - \lambda'} \notin \QQ,
\]
we see that $m$ maps to a point that is not periodic and hence itself is not periodic.

It remains to consider the vertical saddle connection of the long horizontal cylinder.
Recall that the butterfly move $B_\infty$ rescales and rotates the surface by $\pi / 2$, creating a new prototype $(w_\infty, e_\infty) \coloneqq (w - e - 2, -e - 4)$.
We claim that $B_2$ is always admissible for $B_\infty(w, e)$.
Indeed we have $(e + 4q)^2 < D$ since $(w, e)$ is good, so that
\[
    ((-e - 4) + 4q)^2 = (e + 4 - 4q)^2 < (e + 4 \cdot 2)^2
\]
and $B_2$ is admissible.
We can then apply a $B_2$ butterfly move on $B_\infty(w, e)$ and repeat the same rational height calculations as for the long horizontal saddle connection.
These calculations show that the sole point along the long vertical saddle connection that is potentially periodic is its midpoint.
This point is in fact periodic because its a fixed point of the Prym involution.
\end{proof}

\subsection{Genus 4}\label{subsec:g4}
Consider the X-shaped translation surface $X(w, e)$ as in \autoref{fig:prym4-disc53}.
\begin{figure}[t]
    \centering
    \includegraphics[width=\textwidth]{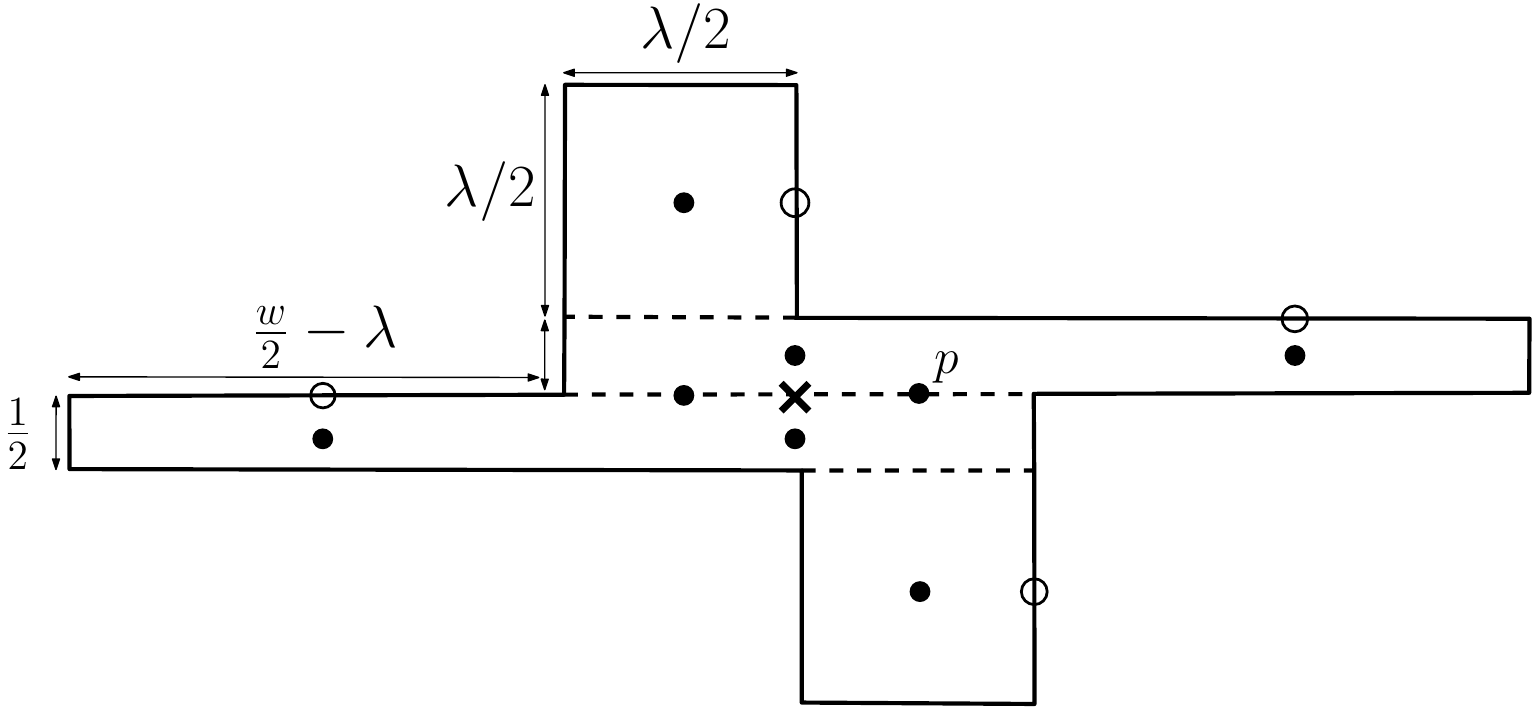}
    \caption{The $X$-shaped prototype $X(w, e)$.}
    \label{fig:prym4-disc53}
\end{figure}
For integers $w \in \NN$ and $e \in \ZZ$, it consists of two $\lambda/2 \times \lambda/2$ squares attached to two $w/2 \times 1/2$ rectangles, where $\lambda = (e + \sqrt{e^2 + 4w})/2$.
In each direction the surface decomposes into four horizontal cylinders, two \emph{short} simple cylinders and two \emph{long} cylinders of multiplicity one.
McMullen \cite{McMullenPrym} showed that particular choices of $w$ and $e$ yield Veech surfaces in the locus $\Omega E_D(6)$ that generate Teichm\"uller curves of discriminant $D = e^2 + 4w$.

In contrast with genus 2 and 3, Lanneau--Nguyen \cite{lanneau_nguyen_2020} showed that for nonsquare discriminants $D$ the loci $\Omega E_D(6)$ are nonempty and connected.
When $D > 5$, they showed that every eigenform can be $\SL(2, \RR)$-normalized to a prototype $X(w, e)$.
(In the remaining case of $D = 5$, they construct another explicit prototype in $\Omega_5(6)$.)
These prototypes $X(w, e)$ also admit butterfly moves $B_k$; the butterfly move $B_\infty$ is always admissible, and the move $B_2$ is admissible whenever $(e + 8)^2 < D$.

We now call a prototype $(w, e)$ \emph{good} if either
\begin{itemize}
    \item $w \equiv 1 \pmod{2}$ and $B_2$ is admissible, or
    \item $w \equiv 2 \pmod{4}$ and $B_4$ is admissible.
\end{itemize}
As in genus 3, we have good prototypical eigenforms $X(w, e)$ in each locus $\Omega E_D(6)$ for $D$ sufficiently large:
\begin{lemma}\label{lem:g4--good-prototypes}
    Let $S^4_{\text{bad}} \coloneqq \{ 5, 8, 12, 13, 17, 21, 24, 32, 33, 41, 57, 65, 73, 97, 113 \}$.
    For any $D \notin S^4_{\text{bad}}$, then there is a good prototype $(w, e)$ with discriminant $D$.
\end{lemma}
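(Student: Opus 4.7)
The plan is to prove \autoref{lem:g4--good-prototypes} in two stages: an asymptotic argument that works for all sufficiently large $D$, and a direct computation that pins down the finite bad set $S^4_{\text{bad}}$. The key observation is that the two clauses in the definition of \emph{good} are governed by easy congruence conditions on $w$ together with the quadratic inequalities $(e+8)^2 < D$ (for $B_2$) and $(e+16)^2 < D$ (for $B_4$). Since these admissibility windows have length roughly $2\sqrt{D}$, they become very wide as $D$ grows, while the parity conditions on $w$ are controlled by residues of $e$ modulo a small power of $2$. So the heart of the proof is a counting-versus-congruences argument.

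First I would write down precisely which integer pairs $(w,e)$ give a valid prototype $X(w,e) \in \Omega E_D(6)$: positivity of side lengths ($w > \lambda$, equivalently $w > e + 1$), the equation $D = e^2 + 4w$, and the Lanneau--Nguyen normalization conditions that restrict the residue of $e$ (and hence of $w$) modulo small integers. Then I would fix a residue class for $D \pmod{16}$ and, within it, solve for the admissible residues of $e$. Concretely, $w$ odd is equivalent to $D - e^2 \equiv 4 \pmod 8$ and $w \equiv 2 \pmod 4$ is equivalent to $D - e^2 \equiv 8 \pmod{16}$; a brief case check over the possible residues of $D$ will show that at least one of Case 1 or Case 2 has a nonempty residue class of $e$ compatible with the prototype conditions.

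Next, inside the correct case I would exhibit an explicit $e$ inside the admissibility window. The interval for Case 1 is $-8 - \sqrt{D} < e < \sqrt{D} - 8$ (and similarly with $16$ for Case 2), and the additional prototype constraint $e < w - 1$ only lops off a bounded portion near the right endpoint. Because any arithmetic progression of common difference at most $16$ meets an interval of length $2\sqrt{D} - O(1)$ as soon as $D$ is large enough, we can always select such an $e$. Tracking the implicit constants through the argument yields an explicit threshold $D_0$.

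Finally I would handle the remaining finite range $D \le D_0$ by direct enumeration: for each nonempty locus $\Omega E_D(6)$ in this range, list all prototypes $(w,e)$ in each connected component and test the two goodness conditions. The discriminants for which no good prototype exists in some component are exactly those listed in $S^4_{\text{bad}}$. The main obstacle will be bookkeeping: keeping track of the interplay between (i) Lanneau--Nguyen's normalization conditions on $(w,e)$, (ii) the parity requirement on $w$, and (iii) the admissibility inequality, especially for the residue classes of $D \pmod{16}$ in which only one of Case 1 or Case 2 is available. Verifying that the threshold $D_0$ is small enough that no extra discriminants must be adjoined to $S^4_{\text{bad}}$ is the step most likely to require care, and it is where a computer-assisted enumeration is most useful.
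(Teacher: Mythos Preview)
Your proposal is correct and follows essentially the same two-stage strategy as the paper: a finite computer check for small discriminants combined with an explicit construction for large ones based on congruence conditions on $e$ modulo small powers of $2$. The paper's version is slightly more streamlined---rather than arguing that some $e$ in the right residue class must lie in the admissibility window, it simply writes down a table assigning to each residue of $D$ modulo $8$ or $16$ a single value $e \in \{-3,-2,-1,0\}$ for which $w = (D-e^2)/4$ automatically has the required parity, then observes that $(e+4q)^2 \le 225$ over all table entries, so $D > 255$ suffices---but the underlying idea is the same.
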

For a proof, see \autoref{appendix:g4--good-prototypes}.

\begin{proof}[Proof of \autoref{thm:main_thm} in genus 4]
If $D = 5$, we note that $\Omega E_5(6)$ is the $\SL(2, \RR)$-orbit of the Bouw-M\"oller surface $B(3, 5)$, which B. Wright \cite{wright2021periodic}*{Theorem 5.1} proved has periodic points the fixed points of the Prym involution.
If $D \in S^4_{\text{bad}}$, then we can use the algorithm of Chowdhury--Everett--Freedman--Lee \cite{icerm} to compute the periodic points of any explicit prototype $X(w, e)$ in $\Omega_D(6)$.
Otherwise, we compute the periodic points of a good prototypical eigenform $X(w, e)$ in the unique connected component of $\Omega_D(6)$.
Let $B_q$ denote either $B_2$ when $w \equiv 1 \pmod{2}$ or $B_4$ when $w \equiv 2 \pmod{4}$.

Applying \autoref{lem:two-regions-mult-one} to the two long horizontal and vertical cylinders, we find that the interior points of these cylinders that are potentially periodic are the points marked with solid dots in \autoref{fig:prym4-disc53}.
We claim that none of them points are periodic.
By using Dehn twists, the Prym involution, and the $B_\infty$ butterfly move, it suffices to show that the point $p$ in \autoref{fig:prym4-disc53} is not periodic:

\begin{lemma}\label{lem:g4-remove-interior-per-pt}
    The point $p$ in \autoref{fig:prym4-disc53} is not periodic.
\end{lemma}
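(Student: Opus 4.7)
The plan is to mirror the midpoint argument carried out in the genus 3 case. Specifically, I would apply the butterfly move $B_q$, with $q = 2$ when $w \equiv 1 \pmod{2}$ and $q = 4$ when $w \equiv 2 \pmod{4}$, to transport $p$ to a point on the new prototypical eigenform $B_q \cdot X(w, e)$, and then invoke the Rational Height Lemma (\autoref{lem:rat-ht}) on the image to derive a contradiction.

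First I would fix flat coordinates on $X(w, e)$ placing the origin at a convenient fixed point of the Prym involution and write out the coordinates of the specific $p$ under consideration. I would then locate a fixed point $s$ of the Prym involution such that the straight segment from $s$ to $p$ has slope $q/w$, matching the butterfly cylinder direction. The goodness condition $w = q/2 + kq$ for some integer $k \ge 0$ is exactly what makes such a segment exist and close up on the surface correctly, wrapping the appropriate number of times through the long horizontal cylinders — entirely analogous to the slope-$q/w$ saddle connection $\gamma$ constructed from $s$ through $m$ in the genus 3 argument.

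Next I would apply the butterfly matrix
\[
    B_q = \begin{bmatrix} w & \lambda \\ q & 1 \end{bmatrix}^{-1}
\]
(together with the appropriate scaling so that $B_q \cdot X(w, e)$ sits in the standard prototypical form of \autoref{fig:prym4-disc53}) to the holonomy vector from $s$ to $p$. Since that vector is parallel to the butterfly direction, its image is purely horizontal, of the form $(c, 0)$ for an explicit $c$ that is a rational multiple of $\lambda'$. Reading off where $B_q(p)$ sits relative to $B_q(s)$ on the new surface then lets me compute the height of $B_q(p)$ inside a specific vertical cylinder of $B_q \cdot X(w, e)$.

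The main obstacle is the bookkeeping: one must verify that the slope-$q/w$ segment from $s$ to $p$ is genuinely embedded on $X(w, e)$ for both parity regimes of $w$, identify which vertical cylinder on $B_q \cdot X(w, e)$ actually contains $B_q(p)$, and then confirm that the resulting height ratio has a nonzero coefficient of $\lambda'$, so that it is irrational. Once that is done, \autoref{lem:rat-ht} applied to this vertical cylinder shows $B_q(p)$ is not periodic; since butterfly moves preserve periodic points, $p$ itself is not periodic, completing the lemma.
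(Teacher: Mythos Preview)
Your plan diverges from the paper's argument and, as written, has a real gap.

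The paper does \emph{not} use the butterfly move $B_q$ to handle $p$. Instead it works directly on $X(w,e)$ with the cylinder decomposition in the direction of slope $1/\lambda$ (the diagonal of the small $\lambda/2\times\lambda/2$ squares). That decomposition has four cylinders, two small of width $w_S$ and two large of width $w_L$, with $w_S+w_L=\lambda/2$ and $w_L=(n+1)\lambda/2-w/2$ for the integer $n$ counting windings in the long horizontal cylinders. A similar-triangles computation gives $h(C_L,p)/h(C_L)=(\lambda/4)/w_L$, which is irrational, so the Rational Height Lemma kills $p$ immediately. No goodness hypothesis, no butterfly move, no transport to another prototype is needed.

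Your proposal hinges on the claim that one can find a Prym fixed point $s$ with a slope-$q/w$ geodesic from $s$ to $p$, ``exactly as in genus~3.'' But that analogy breaks: the point $p$ lies on the \emph{horizontal} saddle connection through the interior fixed point (indeed, it is the midpoint between that fixed point and the cone point), so the straight displacement from $s$ to $p$ has slope $0$, not $q/w$. Its coordinates involve $\lambda$, which is why the direction that sees $p$ at a clean fractional height is $1/\lambda$ rather than $q/w$. The ``goodness condition $w=q/2+kq$'' governs how slope-$q/w$ trajectories wrap and has nothing to do with reaching $p$; you have asserted the key geometric input without checking it, and in fact it fails.

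There is also a structural problem. Later in the section the paper disposes of the saddle-connection midpoint $m$ precisely by showing that $B_q$ carries $m$ to the point $p$ on $B_q\cdot X(w,e)$, and then invoking \emph{this} lemma on the new prototype. If your proof of the lemma itself required a further butterfly move, you would need $B_q\cdot X(w,e)$ to again be good, which is not guaranteed; you would be pushed into an unbounded chain of prototypes. The paper's $1/\lambda$ argument avoids this entirely because it applies to any rectilinear $X$-prototype, good or not.
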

\begin{proof}
    Consider the four-cylinder decomposition in the direction of slope $1/\lambda$ as shown in \autoref{fig:g4-1-lmbd.pdf}:
    \begin{figure}[ht]
    \centering
    \includegraphics[width=\textwidth]{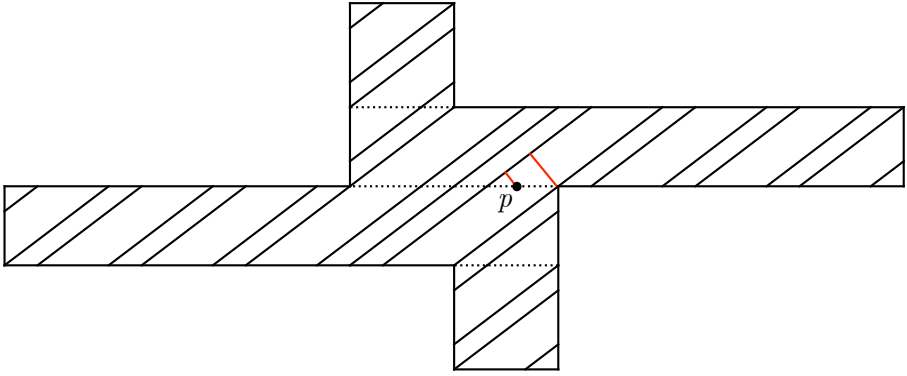}
    \caption{A cylinder decomposition in the direction $1/\lambda$ for the prototype $X(7, -4)$.}
    \label{fig:g4-1-lmbd.pdf}
    \end{figure}
    It consists of two small cylinders $C_S$ and two large cylinders $C_L$, none of which are simple.
    (See Lanneau--Nguyen \cite{lanneau_nguyen_2020}*{Lemma B.1}.)
    Let $w_S$ and $w_L$ denote the widths of the smaller and larger cylinders, respectively.
    Then we have $w_S + w_L = \lambda/2$, and $n w_S + (n - 1)w_L = w/2 - \lambda$ where $n \in \NN$ measures the winding of saddle connections in the long horizontal cylinders.
    This implies $w_L = (n+1)\lambda/2 - w/2$.
    The Rational Height Lemma and similar triangles imply
    \begin{equation*}
        \frac{h(C_L, p)}{h(C_L)} = \frac{\lambda/4}{w_L} = \frac{\lambda/4}{(n+1)\lambda/2 - w/2} \notin \mathbb{Q},
    \end{equation*}
    meaning $p$ is not a periodic point.
\end{proof}

In summary, none of the interior points of the four long horizontal and vertical cylinders are periodic.
It remains to classify periodic points on the long horizontal saddle connections of the long horizontal cylinders and the long vertical saddle connections of the long vertical cylinders.
As before, by symmetry it suffices to consider one horizontal boundary saddle connection.
Using the butterfly move $B_2$ (which is admissible in either case) and repeating the calculations as in the \autoref{subsec:g3}, it remains to rule out the midpoint $m$ of the horizontal saddle connection.
\begin{figure}[tb]
    \centering
    \includegraphics[width=\textwidth]{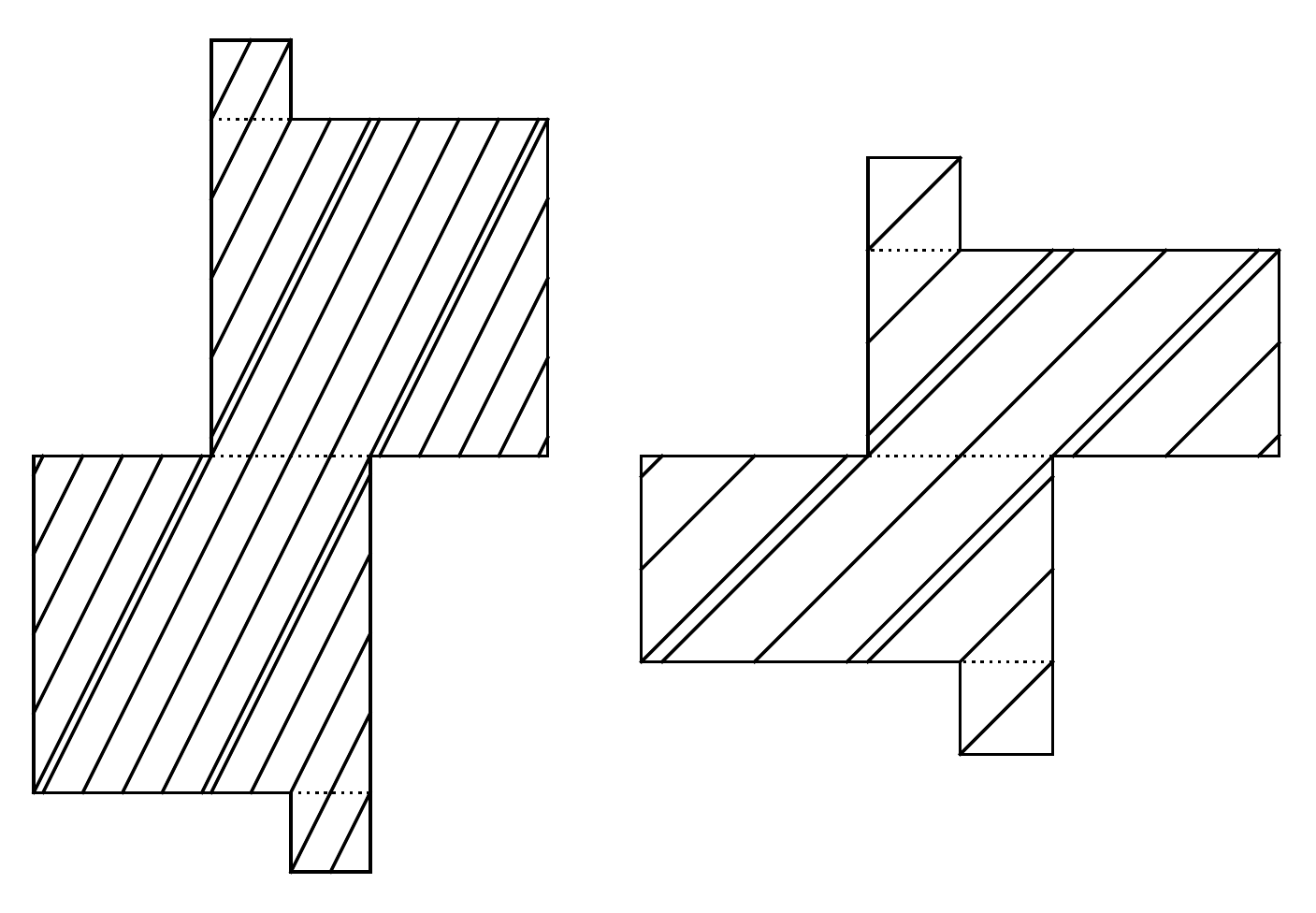}
    \caption{Cylinder decompositions from the $B_2$ butterfly move on the prototypes $X(1, -4)$ and $X(2, -4)$}
    \label{fig:g4-b2-decompositions}
\end{figure}
Consider the geodesic segment $\gamma$ starting at the interior Prym fixed point and ending at $m$ having slope $q / w$; see \autoref{fig:g4--fake-wp} for two examples.
\begin{figure}[tb]
    \centering
    \includegraphics[width=\textwidth]{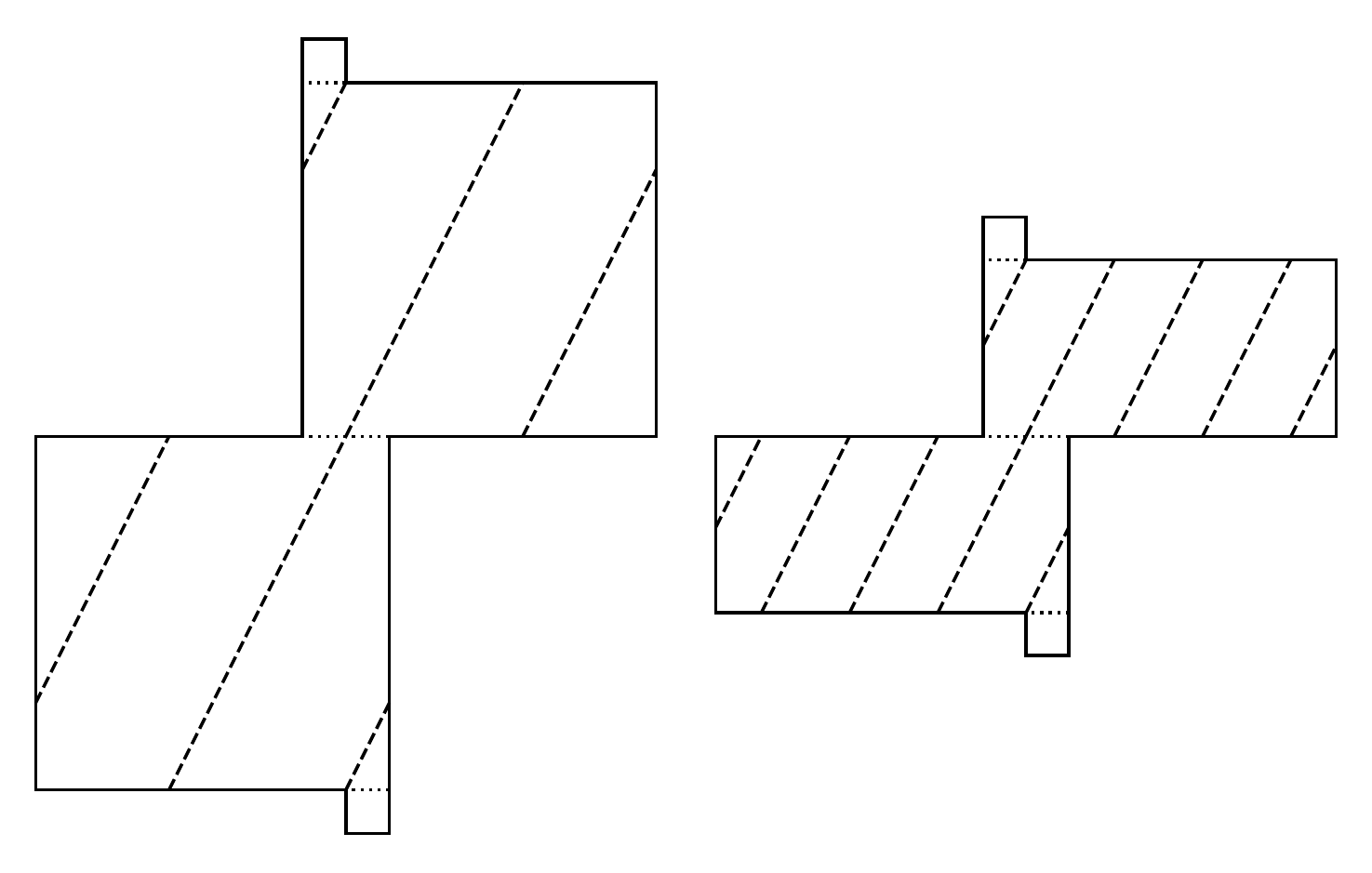}
    \caption{The geodesic connecting $m$ to $-m$ maps under the butterfly move $B_q$ to the horizontal saddle connection through the interior fixed point.}
    \label{fig:g4--fake-wp}
\end{figure}
Note that $\gamma$ has holonomy vector $[w/4, q/4]^T$.
Under the move $B_q$, $\gamma$ becomes the horizontal saddle connection through the interior fixed point.
Moreover, the midpoint $m$ must map halfway between the interior fixed point and the cone point on the new surface $B_q(w, e)$, i.e., to the point $p$ in \autoref{lem:g4-remove-interior-per-pt}.
We can then repeat the argument in \autoref{lem:g4-remove-interior-per-pt} on the prototype $B_q(w, e)$, noting that $B_q(w, e)$ has no tilt by \autoref{lem:g4--no-tilt}.
(We caution that $B_4(w, e) = (w', 2, 0, e')$, i.e., the height of the new long horizontal cylinder is $2$ and not $1$.
But the value of $h$ does not enter the calculations.)
We conclude that $m$ itself is not a periodic point.
\end{proof}

\appendix
\section{Good Prototypes}\label{appendix:good-prototypes}
\subsection{Genus 3}\label{appendix:g3--good-prototypes}
\begin{proof}[Proof of \autoref{lem:g3--good-prototypes}]
    We use explicit computer calculation to search for good prototypes in all discriminants $D \le 900$; the discriminants for which there are none are those in the statement.
    Now assume $D > 900$.
    Every admissible discriminant $D$ appears uniquely in the first column of \autoref{tab:g3--good-prototypes}.
        \begin{table}[ht]
        \centering
        \begin{tabular}{@{}cccc@{}}
        \toprule
        $D$     & $e$   & $w \coloneqq (D - e^2)/8$   & $q$ \\ \midrule
        $1 \pmod{16}$  & $\pm 3$ & $1 \pmod{2}$ & $2$ \\
        $8 \pmod{16}$  & $0$     & $1 \pmod{2}$ & $2$ \\
        $9 \pmod{16}$  & $\pm 1$ & $1 \pmod{2}$ & $2$ \\
        $12 \pmod{16}$ & $-2$    & $1 \pmod{2}$ & $2$ \\ \midrule
        $0 \pmod{32}$  & $-4$    & $2 \pmod{4}$ & $4$ \\
        $16 \pmod{32}$ & $0$     & $2 \pmod{4}$ & $4$ \\
        $20 \pmod{32}$ & $-2$    & $2 \pmod{4}$ & $4$ \\ \midrule
        $4 \pmod{64}$  & $-6$    & $4 \pmod{8}$ & $8$ \\
        $36 \pmod{64}$ & $-2$    & $4 \pmod{8}$ & $8$ \\ \bottomrule
        \end{tabular}%
        \caption{Good prototypes $(w, e)$ for each discriminant $D > 900$.}
        \label{tab:g3--good-prototypes}
        \end{table}
    For each $D$ choose $e$ as in the second column, and observe that $w \coloneqq (D - e^2)/8$ is an integer satisfying the congruence in the third column.
    The pair $(w, e)$ is a prototype so long as $e + 2 < w$, because the other arithmetic conditions in Lanneau-Nguyen \cite{LanneauNguyen}*{Theorem 4.1} hold automatically.
    But $e + 2 < 5$, and $w$ is at least $100$, so this inequality holds for $D$ sufficiently large.

    The butterfly move $B_q$ in column four is admissible so long as $(e + 4q)^2 < D$.
    The left-hand side of this inequality over all pairs $(e, q)$ occurring in \autoref{tab:g3--good-prototypes} is largest at $(-2, 8)$, and this holds when $D > 900$.
    This means that when $D > 900$, the prototype $(w, e)$ indicated in \autoref{tab:g3--good-prototypes} is good.
\end{proof}
\begin{lemma}\label{lem:g3--no-tilt}
    If $(w, e)$ is a good prototype, and $B_q$ is the corresponding good butterfly move, then $B_q(w, e)$ is a prototype without tilt.
\end{lemma}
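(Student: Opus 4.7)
The plan is to compute the four-tuple $(w', h', t', e')$ describing $B_q(w,e)$ directly from the explicit butterfly matrix given in \autoref{subsec:butterfly-moves}, and then verify that the good-prototype congruences on $w$ force the tilt coordinate $t'$ to vanish. The first step is to record the general transformation formulas: arguing as in McMullen \cite{McMullenDiscSpin}*{Theorem 7.3} (adapted to the genus 3 discriminant $D = e^2 + 8w$), the new prototype satisfies
\[
    e' = -e - 2q, \qquad h' = \gcd(q, w), \qquad w' = \frac{D - (e')^2}{8\, h'},
\]
and $t'$ is the unique representative in $\{0, 1, \dots, \gcd(w', h') - 1\}$ of a well-defined residue determined by how the lattice of horizontal and vertical saddle connections of $S(w,e)$ sits inside $B_q^{-1}(\mathbb{Z}^2)$. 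I would derive this residue in the form $t' \equiv \alpha(w,e,q) \pmod{h'}$ by reading off the horizontal offset of the rectangular core of $B_q(S(w,e))$ against its lower-left vertex.

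With these formulas in hand, the lemma splits into three short arithmetic checks, one for each clause in the definition of good prototype. When $w$ is odd and $q = 2$, we have $h' = \gcd(2, w) = 1$, so the range $\{0, \dots, h' - 1\}$ already collapses to $\{0\}$ and there is nothing to verify. When $w \equiv 2 \pmod{4}$ and $q = 4$, one gets $h' = 2$, so it suffices to check that $\alpha(w, e, 4)$ is even; I would plug $w = 4k + 2$ into the expression for $\alpha$ and reduce modulo 2. When $w \equiv 4 \pmod{8}$ and $q = 8$, one gets $h' = 4$, and writing $w = 8k + 4$ I would reduce $\alpha(w, e, 8)$ modulo 4.

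The main obstacle is setting up the formula for $\alpha(w,e,q)$ carefully, since unlike the genus 2 case treated in McMullen \cite{McMullenDiscSpin} the genus 3 prototype has three horizontal cylinders and the $q$-butterfly curve $\gamma_q$ wraps through two of them. Concretely, I must identify in flat coordinates the image of a canonical vertex of the $q$-butterfly cylinder and express its horizontal component modulo the new height $h'$; this is where admissibility $(e + 4q)^2 < D$ enters, ensuring that the normalization by $B_q$ does not fold the core rectangle. Once the formula is in hand, the three congruence checks are a one-line modular computation in each case, completing the proof.
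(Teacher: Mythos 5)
There is a genuine gap: your plan hinges on deriving an explicit residue formula $t' \equiv \alpha(w,e,q) \pmod{h'}$ from the flat geometry of the butterfly move, and you defer exactly that derivation as ``the main obstacle.'' Since the modular checks in your second and third cases cannot even be stated without $\alpha$, the proof is not actually carried out for $q = 4$ or $q = 8$. Only your first case ($w$ odd, $q = 2$, where $h' = 1$) is complete as written. A secondary issue: the formula $e' = -e - 2q$ does not match the genus~3 normalization; the correct one (consistent with the admissibility criterion $(e+4q)^2 < D$ used throughout \autoref{subsec:g3}) is $e' = -e - 4q$, and getting this wrong would corrupt the value of $w'$ downstream.

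More importantly, you are missing the observation that makes the whole lattice computation unnecessary. In the Lanneau--Nguyen parametrization the tilt of any prototype is constrained to $0 \le t' < \gcd(w', h')$, not merely $0 \le t' < h'$, so it suffices to show $\gcd(w', h') = 1$ --- no formula for the actual offset is needed. Writing $w = q/2 + kq$ (which is exactly the good-prototype congruence), one gets $h' = \gcd(q, w) = q/2$ and
\[
    w' = \frac{D - (e')^2}{8h'} = \frac{8w - 8eq - 16q^2}{4q} = 1 + 2k - 2e - 4q,
\]
which is odd. Since $h' = q/2 \in \{1, 2, 4\}$ is a power of $2$, $\gcd(w', h') = 1$ and $t' = 0$ in all three cases at once. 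This is the route the paper takes; your approach could in principle be completed, but it front-loads a delicate flat-geometric computation that the arithmetic constraint on $t'$ renders moot.
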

\begin{proof}
    Since $w \equiv q/2 \pmod{q}$, we can write $w = q/2 + kq$ for some $k \in \ZZ$.
    We have $e_q \coloneqq -e - 4q$ and $h_q \coloneqq \gcd(q, w) = \gcd(q, q/2 + kq) = q/2$.
    Because $D = e^2 + 8w = e_q^2 + 8 w_q h_q$, we have
    \begin{align*}
        w_q \coloneqq \frac{D - e_q^2}{8h_q} = \frac{D - e^2 - 8eq - 16q^2}{4q} &= \frac{8w}{4q} - 2e  - 4q = 1 + 2k - 2e - 4q.
    \end{align*}
    Then we have
    \begin{align*}
        0 \le t_q < \gcd(w_q, h_q) = \gcd(1 + 2k - 2e - 4q, q/2) = \gcd(1 + 2k - 2e, q/2) = 1,
    \end{align*}
    since either $q / 2 = 1$, or $q / 2 \in \{2, 4\}$ yet $1 + 2k - 2q$ is odd.
    We conclude that $t_q = 0$ and $B_q(w, e)$ has no tilt.
\end{proof}

\subsection{Genus 4}\label{appendix:g4--good-prototypes}
\begin{proof}[Proof of \autoref{lem:g4--good-prototypes}]
    When $D \le 255$, we use computer search to show that every discriminant not in $S^4_\text{bad}$ has a good prototype.
    Now assume that $D > 255$.
    Each discriminant $D$ belongs to a unique row in \autoref{tab:g4--good-prototypes}; let $(w, e)$ be the indicated prototype.
    \begin{table}[ht]
        \centering
        \begin{tabular}{@{}lccc@{}}
        \toprule
        $D$           & $e$  & $w \coloneqq (D - e^2)/4$ & $q$ \\ \midrule
        $0 \pmod{8}$  & $-2$ & $1 \pmod{2}$              & $2$ \\
        $4 \pmod{8}$  & $0$  & $1 \pmod{2}$              & $2$ \\
        $5 \pmod{8}$  & $-1$ & $1 \pmod{2}$              & $2$ \\ \midrule
        $1 \pmod{16}$ & $-3$ & $2 \pmod{4}$              & $4$ \\
        $9 \pmod{16}$ & $-1$ & $2 \pmod{4}$              & $4$ \\ \bottomrule
        \end{tabular}
        \caption{Good prototypes in genus 4 for discriminant $D > 225$}
        \label{tab:g4--good-prototypes}
    \end{table}
    Then we see that $(w, e)$ satisfies all the arithmetic conditions in Lanneau--Nguyen \cite{lanneau_nguyen_2020}*{Proposition 2.2}.
    Indeed, we have $w > 0$, $D = e^2 + 4w$ by construction and $(e + 4)^2 < 16 \gg D$ so that $\lambda < w/2$.

    Observe that the largest value of $(e + 4q)^2$ over the pairs $(e, q)$ in the table is $(-3 + 4 \cdot 4)^2 = 225$.
    This means for all discriminants $D > 225$ we have the corresponding $B_q$ is admissible.
\end{proof}

\begin{lemma}\label{lem:g4--no-tilt}
    If $B_q$ with $q \in \{2, 4\}$ is the corresponding butterfly move for the good prototype $(w, e)$, then the prototype $B_q(w, e)$ has no tilt.
\end{lemma}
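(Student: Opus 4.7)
The plan is to mimic the genus~3 proof of \autoref{lem:g3--no-tilt}, exploiting that the butterfly move $B_q$ yields a new prototype $(w_q, h_q, t_q, e_q)$ whose parameters can be read off from $(w, e)$ via explicit formulas, after which showing $t_q = 0$ becomes a parity check.

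First I would record the transition formulas. In genus~4, where $D = e^2 + 4w$, the butterfly move $B_q$ produces $e_q \coloneqq -e - 4q$, $h_q \coloneqq \gcd(q, w)$, and $w_q$ determined by $D = e_q^2 + 4 w_q h_q$. These are the direct analogs of the genus~2 formulas quoted in \autoref{subsec:butterfly-moves}, with $q$ replacing $2$ throughout. Using the hypothesis $w \equiv q/2 \pmod{q}$, I would write $w = q/2 + kq$ for some $k \in \ZZ$, from which $h_q = \gcd(q, q/2 + kq) = q/2$, equal to $1$ when $q = 2$ and to $2$ when $q = 4$.

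Next I would compute $w_q = (D - e_q^2)/(4 h_q)$ explicitly. Expanding $(e + 4q)^2$, substituting $D = e^2 + 4w$, and inserting $w = q/2 + kq$ gives $h_q w_q = w - 2eq - 4q^2$, hence $w_q = 1 + 2k - 4e - 8q$, which is odd irrespective of $q \in \{2, 4\}$. Finally, the normalization for a prototype requires $0 \le t_q < \gcd(w_q, h_q)$; since $w_q$ is odd while $h_q \in \{1, 2\}$, one has $\gcd(w_q, h_q) = 1$, forcing $t_q = 0$.

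The main subtlety is just to confirm that the butterfly-move transition formulas in genus~4 do take the form $(e_q, h_q, w_q)$ displayed above; once that is verified by the same reasoning as in McMullen's original genus~2 analysis, the conclusion is a one-line parity check that runs in complete parallel with the genus~3 argument of \autoref{lem:g3--no-tilt}, with the only change being the replacement of $8$ by $4$ in the discriminant relation.
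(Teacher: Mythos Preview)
Your proposal is correct and follows exactly the same approach as the paper's proof: write $w = q/2 + kq$, compute $h_q = \gcd(q,w) = q/2$ and $w_q = (D - e_q^2)/(4h_q)$, and then observe that $w_q$ is odd so that $\gcd(w_q,h_q)=1$ forces $t_q=0$. In fact your arithmetic $w_q = 1 + 2k - 4e - 8q$ is slightly cleaner than the paper's displayed $1 + 2k - 4 - 8q$ (which drops an $e$ but still yields an odd number), and your citation of the transition formulas is exactly what the paper supplies via Lanneau--Nguyen \cite{lanneau_nguyen_2020}*{Proposition 2.7}.
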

\begin{proof}
    Write $w = q/2 + kq$ for some $k \in \NN$.
    Via Lanneau--Nguyen \cite{lanneau_nguyen_2020}*{Proposition 2.7} we have $e_q = -e - 4q$ and $h_q = \gcd(q, w) = q/2$.
    Now
    \begin{align*}
        w_q = \frac{D - e_q^2}{4h_q} = \frac{D - e^2 - 8q - 16q^2}{2q} &= \frac{4w}{2q} - 4 - 8q = 1 + 2k - 4 - 8q.
    \end{align*}
    This implies that
    \begin{align*}
        0 \le t' < \gcd(w', h') = \gcd(1 + 2k - 4 - 8q, q/2) = \gcd(1 + 2k - 4, q/2).
    \end{align*}
    When $q = 2$ we have $\gcd(1 + 2k - 4, 1) = 1$, and when $q = 4$ we also have $\gcd(1 + 2k - 4, 2) = 1$.
    This implies that $t' = 0$ and $B_q(w, e)$ has no tilt as desired.
\end{proof}

\bibliography{references}
\bibliographystyle{abbrv}
\end{document}